\newtheorem{theorem}{Theorem}
\newtheorem{lemma}{Lemma}
\newtheorem{corollary}{Corollary}
\newenvironment{proof}{\noindent{\em Proof.}{}}{\hfill\rule{0.2em}{0.5em}\medskip}
\newcommand{\ZZ}{\mathbf{Z}}
\newcommand{\RR}{\mathbf{R}}
\newcommand{\conv}{\mathop{\rm conv}}
\newcommand{\cone}{\mathop{\rm cone}}
\newcommand{\Ver}{\mathop{\rm Vert}}
\newcommand{\TT}{{\cal T}}
\begin{document}

\title{On the number of irreducible points in~polyhedra}
\author{A.\,Yu.\,Chirkov \qquad N.\,Yu.\,Zolotykh}
\date{chirkov@vmk.unn.ru \qquad \underline{zolotykh@vmk.unn.ru}  \\
Lobachevsky State University of Nizhni Novgorod\\
pr. Gagarina, 23; Nizhni Novgorod; 603950 Russia\\[2em]
{\it Abbreviated title:} On the number of irreducible points\\[2em]
AMS subject classification code (2000): 52C07, 68Q32\\[2em]
}
\maketitle

\begin{abstract}
An integer point in a polyhedron is called irreducible iff it is not the midpoint of two other integer points in the polyhedron.
We prove that the number of irreducible integer points in $n$-dimensional polytope with radius $k$ given by a system of $m$ linear inequalities is at most $O(m^{\lfloor\frac{n}{2}\rfloor}\log^{n-1} k)$ if $n$ is fixed.
Using this result we prove the hypothesis asserting that the teaching dimension in the class of threshold functions of $k$-valued logic in $n$ variables is
$\Theta(\log^{n-2} k)$ for any fixed $n\ge 2$.
\end{abstract}

Keywords:
polyhedron, irreducible points, vertex, integer lattice, threshold function, teaching set, teaching dimension

\section{Introduction}

In the paper we study the number of irreducible points in polyhedra.
Let $P$ be a (convex) polyhedron in $\RR^n$. 
A point $x\in P\cap\ZZ^n$ is called to be {\em irreducible} in $P$ (more precisely, in $P\cap\ZZ^n$)
iff $x$ can not be represented as $x=\frac{1}{2}(y+z)$, where $y$ and $z$ are different points in $P\cap\ZZ^n$. 

It is not hard to see that any vertex of $P_I=\conv(P\cap\ZZ^n)$ is irreducible in $P\cap\ZZ^n$.
The converse is not true, as the following example shows. 
Let $P=\{x\in\RR^2:\ x_1+x_2\ge 1,\ 2x_1-x_2\le 2,\ -x_1+2x_2\le 2\}$.
The point $(1,1)$ is irreducible in $P$, but it is not a vertex of $P_I$.
Nevertheless these properties (irreducibility and being vertex) are similar, as evidenced by a nearness of bounds for
the number of vertices and the number of irreducible points \cite{Shevchenko1981}.
We remark that the number of vertices in $P_I$ is more studied than the number of irreducible points
because of the importance of the former in integer linear programming, but the number of irreducible points is of separate interest too.
In particular, they appear in studying a teaching dimension of the class of threshold functions (see Section~\ref{sec_td_irr}).

Suppose that a polyhedron $P$ is given by a system of linear inequalities $Ax\le b$, where $A=(a_{ij})\in\ZZ^{m\times n}$, $b=(b_i)\in\ZZ^m$, $|a_{ij}| \le \alpha$, $|b_i|\le\beta$, $\gamma = \max\{\alpha,\,\beta\}$.
Let $\varphi$ be the sum of the sizes of all inequalities in $Ax\le b$, that is, $\varphi = O(mn\log\gamma)$.

Upper bounds for the number of vertices in $P_I$ are proposed in \cite{Shevchenko1981}, \cite{HayesLarman1983}, \cite{Morgan1991}, \cite{CHKMcD1992}, etc.
In \cite{CHKMcD1992} it is proved that, for any fixed $n$, $P_I$ has at most $O(m^n \varphi^{n-1})$ vertices.
More precise bound, $O(m^{\lfloor \frac{n}{2}\rfloor}\log^{n-1}\gamma)$, was found in~\cite{Chirkov1997} (see~\cite{VeselovChirkov2007}).
To obtain the upper bounds an approach due to Shevchenko \cite{Shevchenko1981} based on a {\em separation property} is used.
A method developed in \cite{HayesLarman1983} and \cite{CHKMcD1992} using {\em reflecting sets} is essentially equivalent to Shevchenko's approach.
In fact, the results in \cite{Shevchenko1981} and \cite{HayesLarman1983} don't use any facts about vertices of $P_I$ other than they are not
irreducible. Thus, the upper bounds in these two earliest papers are valid for the number of irreducible points too. 
On the other hand, the proofs of tight bounds in~\cite{CHKMcD1992} and \cite{Chirkov1997} use the irreducibility of the vertices as well as
their extremality. So, these proofs are not applicable for the case of the number of irreducible points.

Lower bounds for the number of vertices in $P_I$ are proposed in \cite{Veselov1984}, \cite{Morgan1991}, \cite{BHL1992}, \cite{Chirkov1996}, etc.
In \cite{Veselov1984} it was shown that, for any fixed $n$, a knapsack polytope can have $\Omega(\varphi^{n-1})$ vertices.
Another construction with $\Omega(\varphi^{n-1})$ lower bound was proposed in \cite{BHL1992}.
In \cite{Chirkov1996} it was proved that, for any $m$ and any fixed $n$, there exists a polyhedron with
$\Omega(m^{\lfloor \frac{n}{2}\rfloor}\log^{n-1}\gamma)$ vertices.
It is clear that the lower bounds for the number of vertices in $P_I$ are also true for the number of irreducible points in $P$. 

For some other results and comments concerning bounds for the number of vertices in $P_I$
see \cite{Shevchenko1995}, \cite{Zolotykh2006}, \cite{VeselovChirkov2007}.

The main result of the paper is a tight upper bound for the number of irreducible points in a polyhedron.
When $n$ is fixed this bound is close to the lower bound and it is asymptotically
the same as the bound for the number of vertices in $P_I$.

Let $P$, $P_1, P_2, \dots, P_s$ be polytopes (bounded polyhedra) in $\RR^n$.
If $P = \cup_{i=1}^s P_i$, then $\{P_1, P_2, \dots, P_s\}$ is called an ({\em inner}) {\em cover} of the polytope $P$.
If the intersection of any two polytopes in the cover is empty or it is their common face, then
the cover is called a {\em regular partition}.
If all polytopes in a regular partition are simplexes then the partition is called a {\em triangulation}. 

Our method for finding the upper bounds for the number of irreducible points in a polytope consists of the following.
First, we obtain a bound for the number of irreducible points in a parallelepiped (see Section~\ref{sec_parallelepiped_irr}).
In Section \ref{sec_cobering_irr} we construct a cover of a polytope $P$ by parallelepipeds $P_1, P_2, \dots, P_s$.
To do this we build a triangulation of the polytope, then each simplex in the triangulation is covered by parallelepipeds.
It is not hard to see that if $x\in P_i$ is irreducible in $P$ then $x$ is irreducible in $P_i$. 
This property allows us (in Section \ref{sec_polytope_irr}) to find a bound for the number of irreducible points in $P$.
Namely Theorem~\ref{tNirred0} asserts that the number of irreducible points in a polytope $P$ is at most
$O\left(m^{\lfloor\frac{n}{2}\rfloor}\log^{n-1} (\alpha\beta)\right)$ ($n$ is fixed).
Theorem~\ref{tNirred} asserts that if $P$ has radius $k$ then the number of its irreducible points is at most
$O(m^{\lfloor\frac{n}{2}\rfloor}\log^{n-1} k)$ ($n$ is fixed).

As in the cited papers, where the upper bounds for the number of vertices in $P_I$ were found, 
the (analogue of) separation property plays the central role in our construction too.
On the other hand, to our knowledge, using the covering of a simplex with parallelepipeds and bounding 
the number of irreducible points (or vertices) in them
are novel for the problems under consideration. Also, we believe that our construction is clearer than the methods in~\cite{CHKMcD1992} and~\cite{Chirkov1997}.

Our results on the number of irreducible points are used to prove a hypothesis concerning the teaching dimension of the class of threshold functions of $k$-valued logic in Section~\ref{sec_td_irr}.

Let $n\ge 1$, $k\ge 2$, $E_k=\{0,1,\dots,k-1\}$.
A function $f:~ E_k^n\to\{0,1\}$ is called a {\em threshold} function iff there exists a hyperplane
separating the set $M_0(f)$ of points, in which $f$ is $0$, and the set $M_1(f)$ of points, in which $f$ is $1$, that is,
there are real numbers $a_0, a_1, \dots, a_n$, such that
$$
M_0(f) = \left\{x=(x_1,x_2,\dots,x_n)\in E_k^n:~ \sum_{j=1}^n a_jx_j \le a_0 \right\}.
$$
The inequality $\sum_{j=1}^n a_jx_j \le a_0$ is called the {\em threshold inequality}.
It is easy to see that its coefficients can be chosen integer.
Denote by $\TT(n,k)$ the set of all threshold functions defined on $E_k^n$.

A set $T$ is called a {\em teaching} set for $f\in\TT(n,k)$, 
iff, for any function $g\in\TT(n,k) \setminus\{f\}$, there is a point $z\in T$ such that $f(z)\ne g(z)$.
Teaching set is appeared in the problem of deciphering threshold function (or ``learning halfspaces'' in Algorithmic Learning Theory terminology)
\cite{ZolotykhShevchenko1995}, \cite{Hegedus1995}.
A teaching set $T$ of $f\in\TT(n,k)$ is called a {\em minimal teaching} set,
if no proper subset of it is teaching for $f$.
It is known (see, for example, \cite{ShevchenkoZolotykh1998}, \cite{ZolotykhShevchenko1999}), 
that the minimal teaching set $T(f)$ of any threshold function $f$ is unique.
The maximum cardinality of minimal teaching set,
$$
\sigma(n,k) = \max_{f\in\TT(n,k)} |T(f)|,
$$
is called the {\em teaching dimension}.

Bounds for $\sigma(n, k)$ are constructed in \cite{Hegedus1994}, \cite{ShevchenkoZolotykh1998}, \cite{ZolotykhShevchenko1999}, \cite{Zolotykh2008}, \cite{ZolotykhChirkov2012}, etc.
A generalization is considered in \cite{ShevchenkoZolotykh1998a}.
It is known that $\sigma(n, k)$ depends on $n$ exponentially, in particularly, $\sigma(n,2)=2^n$.
In \cite{Hegedus1994} it is proved using \cite{Shevchenko1985}, \cite{CHKMcD1992} that for any fixed $n$
$\sigma(n, k) = O(\log^{n-1} k).$
In \cite{ShevchenkoZolotykh1998}, \cite{ZolotykhShevchenko1999} a lower bound
$\sigma(n, k) = \Omega(\log^{n-2} k)$
is obtained.
See also \cite{Zolotykh2008}. 
In \cite{ShevchenkoZolotykh1998} it is proved that $\sigma(2,k)=4$.
In \cite{ZolotykhChirkov2012} 
the following hypothesis puts forward
$$
\sigma(n, k) = \Theta(\log^{n-2} k) 
$$
for any fixed $n\ge 2$.
In Section \ref{sec_td_irr} we prove Theorem \ref{th_upper_sigma} asserting that $\sigma(n, k) = O(\log^{n-2} k)$ for any fixed $n\ge 2$.
Thus, the hypothesis is true.

Note that the average cardinality of the minimal teaching set is studied in \cite{ABS1995}, \cite{VirovlyanskayaZolotykh2003}.

\paragraph{Notation}
Suppose $X\subseteq \RR^n$.
Let $\conv X$ be the convex hull of $X$,
$\cone X$ is the cone hull of $X$ (the set of all linear combinations of vectors in $X$ with nonnegative coefficients),
$\Ver X$ is the set of vertices of $X$.
If $X$, $Y$ are subsets of $\RR^n$ then by
$X+Y$ and $X-Y$ we mean the set of all vectors of the form $x+y$ and consequently $x-y$, where $x\in X$, $y\in Y$.

\section{Irreducible points in parallelepipeds\label{sec_parallelepiped_irr}}

In this section we propose upper bounds for the number of irreducible points in a parallelepiped.

Let $A\in \ZZ^{n\times n}$ be a non-singular matrix, $c=(c_1,c_2,\dots,c_n)\in\ZZ^n$, $b=(b_1,b_2,\dots,b_n)\in\ZZ^n$,
$b< c$. 
Consider a parallelepiped 
$$
P(A,b,c)=\{x\in \RR^n:~ b\le Ax\le c\}.
$$
Denote $M(A,b,c)=P(A,b,c)\cap\ZZ^n$.

\begin{theorem}\label{parallelep_l2}
Let $N$ be the set of all irreducible points in $M(A,b,c)$, 
where $A\in \ZZ^{n\times n}$ is a non-singular matrix, $c\in\ZZ^n$, $b\in\ZZ^n$, 
then
\begin{equation}
|N|\le 2\prod_{i=1}^{n-1} \left(3+2\log\left(1+\frac{c_i-b_i}{3}\right)\right).
\label{eq:aaN}
\end{equation}
\end{theorem}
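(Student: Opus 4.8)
The plan is to pass to the lattice $\Lambda = A\ZZ^n$ via the linear change of variables $y = Ax$. Since $A$ is a bijection, a point $x$ is irreducible in $M(A,b,c)$ if and only if $y = Ax$ is irreducible in $\Lambda \cap B$, where $B = \prod_{i=1}^n [b_i, c_i]$ is the axis-aligned box $\{y:\ b\le y\le c\}$. First I would record the elementary characterization: writing $r_i(y) = \min(y_i - b_i,\, c_i - y_i) \ge 0$ for the distance of $y$ to the $i$-th pair of faces, the point $y$ is reducible exactly when there is a nonzero $w \in \Lambda$ with $|w_i| \le r_i(y)$ for all $i$ (take $y \pm w$ as the two witnesses). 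Thus $y$ is irreducible iff the centered closed box $\prod_{i=1}^n [-r_i(y), r_i(y)]$ contains no nonzero vector of $\Lambda$. This recasts the problem as counting lattice points whose associated centered box is lattice-free, and it is this box-versus-lattice formulation I would work with throughout.

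Two building blocks drive the count. The first is a one-dimensional observation yielding the leading factor $2$: on any line $\ell$ parallel to a lattice vector $v \in \Lambda$, the lattice points of $B \cap \ell$ form an arithmetic progression, and any non-extreme point $p$ of it is reducible, since its neighbours $p \pm v'$ (with $v'$ the primitive generator of $\Lambda \cap \RR v$) both lie in $B$. Hence each such line carries at most two irreducible points. The second block is the source of each logarithmic factor. Fixing a direction, say the first, I would sort the irreducible points by the dyadic scale of their depth $r_1(y)$, grouping those with $r_1(y) \in [2^{t-1}, 2^t)$ and treating the two faces $y_1 \approx b_1$ and $y_1 \approx c_1$ separately; this produces only $O(\log L_1)$ groups, where $L_1 = c_1 - b_1$. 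The constants $3$ and the inner $2$ in $3 + 2\log(1 + (c_i - b_i)/3)$ are meant to absorb the two faces and the small-depth boundary bins.

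With these blocks in hand I would run an induction on $n$, the base case $n = 1$ being exactly the arithmetic-progression argument ($|N| \le 2$, matching the empty product). For the inductive step I would fix a dyadic depth bin in direction $1$ and argue that inside it the irreducible points are controlled by an $(n-1)$-dimensional instance obtained by suppressing the first coordinate, with box $\prod_{i=2}^n [b_i, c_i]$, so that the induction hypothesis bounds them by $2\prod_{i=2}^{n-1}(3 + 2\log(1 + (c_i - b_i)/3))$. Multiplying by the $O(\log L_1)$ bins and carefully tracking constants would yield the stated product. A Minkowski-type estimate — the box of radii $r(y)$ being lattice-free forces $\prod_i r_i(y) = O(|\det A|)$ — is what keeps each bin from being overloaded and what makes the depths across bins grow geometrically.

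The main obstacle is precisely the interaction between the axis-aligned box constraints and the skew lattice $\Lambda$ during the dimension reduction. Projecting out a coordinate does not preserve irreducibility verbatim: a lattice vector witnessing reducibility of the projected point may have a large first coordinate and hence fail to fit in the centered box of the original point, and worse, the naive projection of $\Lambda$ need not even be discrete. Controlling this requires restricting attention, within each dyadic bin, to the lattice vectors whose first coordinate is already bounded by $r_1(y)$, and showing — via successive minima — that only logarithmically many scales contribute a genuinely new independent short direction; the base case is then pinned down by the at-most-two-per-line lemma. Getting the bookkeeping of these scales to produce exactly the clean constants in~\eqref{eq:aaN}, rather than merely an $O(\log^{n-1})$ bound, is the delicate part I would expect to spend the most effort on.
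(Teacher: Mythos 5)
Your reformulation is sound (passing to the lattice $\Lambda=A\ZZ^n$ in the box $\prod_i[b_i,c_i]$, and characterizing reducibility of $y$ by the existence of a nonzero $w\in\Lambda$ with $|w_i|\le\min(y_i-b_i,\,c_i-y_i)$), and the high-level shape --- dyadic depth bins in $n-1$ directions contributing the logarithms, plus a one-dimensional extremality argument contributing the factor $2$ --- matches the true structure of the bound. But the proof has a genuine gap exactly where you flag it: the inductive step. You propose to fix a dyadic bin in direction $1$ and invoke the $(n-1)$-dimensional statement for the configuration obtained by ``suppressing the first coordinate,'' yet that configuration is not an instance of the theorem: the projection of $\Lambda$ to the last $n-1$ coordinates is in general a dense subgroup of $\RR^{n-1}$, not a lattice $A'\ZZ^{n-1}$ with $A'$ non-singular, and a witness of reducibility for the projected point need not lift to one whose first coordinate fits the centered box. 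Your proposed repair (successive minima, a Minkowski-type volume bound per bin) is only sketched, and it is not clear it closes the argument, let alone with the stated constants. As written, the induction does not go through.

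The paper needs no induction and no projection. It forms, in one step, the grid of $\prod_{i=1}^{n-1}(2s_i+1)$ sub-parallelepipeds with $s_i=\lceil\log(1+(c_i-b_i)/3)\rceil$: in each constraint direction $i<n$ one takes $s_i$ dyadic slabs growing away from the face $a^{(i)}x=b_i$, one middle slab, and $s_i$ dyadic slabs growing away from $a^{(i)}x=c_i$, while the $n$-th constraint is left whole. It then shows each such piece contains at most two irreducible points by a direct reflection argument: if $x,y,z$ are three points of the piece ordered by $a^{(n)}x$, then one of $2y-x$, $2y-z$ lies in $P(A,b,c)$, because the width of each dyadic slab is by construction at most its distance to the nearer face (up to the additive constants hidden in the $3$'s), so doubling within the slab cannot escape $[b_i,c_i]$. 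Note this is strictly stronger than your ``at most two per line parallel to a lattice vector'' lemma --- the three points need not be collinear, which is precisely what lets the whole thin slab play the role of your line and removes any need for successive minima or a per-bin Minkowski estimate. If you replace your induction by this simultaneous product decomposition and the non-collinear reflection argument, the clean constants $3+2\log(1+(c_i-b_i)/3)$ fall out as $1+2s_i$ per direction.
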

\begin{proof}
Denote
\begin{equation}
s_i=\left\lceil\log\left(1+\frac{c_i-b_i}{3}\right)\right\rceil
\qquad
(i=1,2,\dots,n-1).
\label{eq:aasi}
\end{equation}
This implies that
\begin{equation}
3\cdot 2^{s_i - 1} - 3 < c_i - b_i \le 3\cdot 2^{s_i} - 3.
\label{eq:aalogs}
\end{equation}
Denote by $a^{(1)}, a^{(2)}, \dots, a^{(n)}$ the rows of $A$.
Let $j_1,j_2,\dots,j_{n-1}$ be numbers such that $j_i\in\{0,\dots,2s_i\}$ ($i=1,\dots,n-1$).
Let $P(j_1,j_2,\dots,j_{n-1})$ be the set of points in $\RR^n$ that satisfy the following conditions:
for all $i=1,2,\dots,n-1$
\begin{eqnarray}
\begin{array}{cl}
b_i + 2^{j_i} - 1   \le a^{(i)}x < b_i + 2^{j_i + 1} - 1           & (j_i=0,1,\dots,s_i-1),\\[.5em]
b_i + 2^{j_i} - 1 \le a^{(i)}x \le c_i - 2^{j_i} +1                & (j_i=s_i),\\[.5em]
c_i - 2^{j_i -s_i} + 1 < a^{(i)}x \le c_i - 2^{j_i -1 - s_i} + 1   & (j_i=s_i+1,\dots,2s_i),\\[.5em]
\multicolumn{2}{c}{b_n \le a^{(n)}x \le c_n.}
\end{array}
\label{eq:aaparal4}
\end{eqnarray}
It is not hard to see that the set 
of parallelepipeds $P(j_1,j_2,\dots,j_{n-1})$, where $0\le j_i \le 2s_i$ $(i=1,\dots,n-1)$,
is a cover of $P(A,b,c)$.
This is showed on Fig.\,\ref{fig_paral_partition}, where all irreducible points are vertices of the convex hull of $M(A,b,c)$.

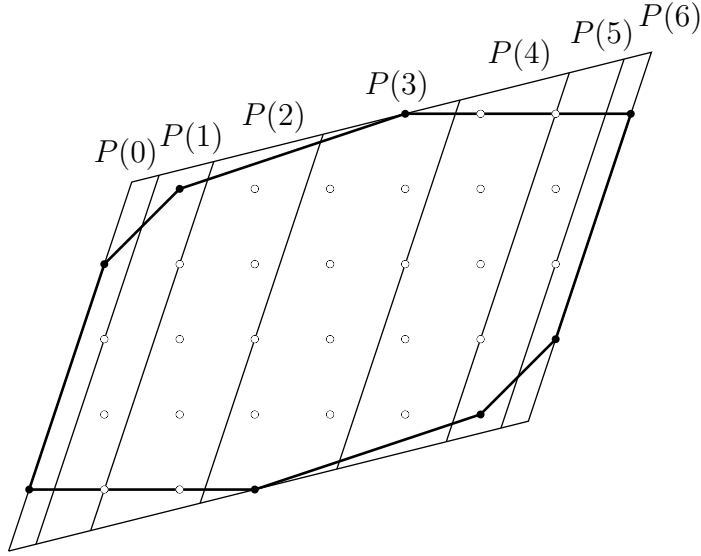
\begin{figure}%
\begin{center}
\psset{xunit=1cm,yunit=1cm}
\begin{pspicture}(0,2)(10,9)

\psline(0.7273,2.1818)(7.6364,3.9091)(9.2727,8.8182)(2.3636,7.0909)(0.7273,2.1818)

\rput(2.3545,7.4636){$P(0)$~}
\rput(3.2273,7.6818){$P(1)$~}
\rput(4.3182,7.9545){$P(2)$~}
\rput(5.9545,8.3636){$P(3)$~}
\rput(7.5909,8.7727){$P(4)$~}
\rput(8.6818,9.0455){$P(5)$~}
\rput(9.5909,9.2727){$P(6)$~}

\psline(1.0909,2.2727)(2.7273,7.1818)
\psline(1.8182,2.4545)(3.4545,7.3636)
\psline(3.2727,2.8182)(4.9091,7.7273)
\psline(5.0909,3.2727)(6.7273,8.1818)
\psline(8.1818,8.5455)(6.5455,3.6364)
\psline(7.2727,3.8182)(8.9091,8.7273)

\psline[linewidth=1pt](1,3)(4,3)(7,4)(8,5)(9,8)(6,8)(3,7)(2,6)(1,3)

\psdots[dotstyle=o]                         (7,8)(8,8)
\psdots[dotstyle=o]          (4,7)(5,7)(6,7)(7,7)(8,7)
\psdots[dotstyle=o]     (3,6)(4,6)(5,6)(6,6)(7,6)(8,6)
\psdots[dotstyle=o](2,5)(3,5)(4,5)(5,5)(6,5)(7,5)
\psdots[dotstyle=o](2,4)(3,4)(4,4)(5,4)(6,4)
\psdots[dotstyle=o](2,3)(3,3)

\psdots(1,3)(4,3)(7,4)(8,5)(9,8)(6,8)(3,7)(2,6)


\end{pspicture}

\caption{Partition of the parallelepiped $0 \le 3x_1 -  x_2 \le 19$, $8 \le -x_1 + 4x_2 \le 26$ into parallelepipeds $P(0),\dots,P(6)$.
In this example all irreducible points are vertices of the convex hull of integer points in the parallelepiped.
Each parallelepiped $P(0)$, $P(3)$, $P(6)$ contains $2$ irreducible points. 
Each $P(1)$, $P(5)$ contains $1$ irreducible point. $P(2)$ and $P(4)$ have no irreducible points.}
\label{fig_paral_partition}

\end{center}
\end{figure}

We show that each $P(j_1,j_2,\dots,j_{n-1})$ contains at most $2$ different points from $N$.
Assume the contrary: let $x,y,z$ be pairwise different points, $x,y,z \in P(j_1,j_2,\dots,j_{n-1})$, and
\begin{equation}
b_n\le a^{(n)}x\le a^{(n)}y\le a^{(n)}z \le c_n.
\label{eq:axayaz}
\end{equation}
We consider the two mutually exclusive cases when 
\begin{equation}
a^{(n)} y - a^{(n)} x \le  a^{(n)} z - a^{(n)} y 
\label{eq:aapar1}
\end{equation}
and when
\begin{equation}
a^{(n)} y - a^{(n)} x >  a^{(n)} z - a^{(n)} y. 
\label{eq:aapar2}
\end{equation}
In the case (\ref{eq:aapar1}) we consider the point $x'=2y-x$, and show that $x'\in P(A,b,c)$.

The conditions $b_n \le a^{(n)}x' \le c_n$ hold, since from (\ref{eq:axayaz}) it follows that
$$
a^{(n)}x' = 2a^{(n)}y - a^{(n)}x \ge a^{(n)}y \ge b_n,
$$
$$
a^{(n)}x' = 2a^{(n)}y - a^{(n)}x \le 2a^{(n)}z - a^{(n)}y \le c_n.
$$

Now we verify the conditions $b_i \le a^{(i)}x' \le c_i$ ($i=1,2,\dots,n-1$).
If $0\le j_i \le s_i-1$, then taking into account (\ref{eq:aalogs}) and (\ref{eq:aaparal4}) we obtain
$$
a^{(i)}x' \le 2(b_i + 2^{j_i+1} - 2) - (b_i + 2^{j_i} - 1) \le b_i + 3\cdot 2^{s_i-1} - 3 < c_i,
$$
$$
a^{(i)}x' \ge 2(b_i + 2^{j_i} - 1) - (b_i + 2^{j_i+1} - 2) = b_i.
$$
If $j_i = s_i$, then
$$
a^{(i)}x' \le 2(c_i - 2^{s_i} + 1) - (b_i + 2^{s_i} - 1) \le c_i,
$$
$$
a^{(i)}x' \ge 2(b_i + 2^{s_i} - 1) - (c_i - 2^{s_i} + 1) \ge b_i.
$$
If $s_i + 1 \le j_i \le 2s_i$, then
$$
a^{(i)}x' \le 2(c_i - 2^{j_i-1-s_i+1} + 1) - (c_i - 2^{j_i-s_i} + 2) = c_i,
$$
$$
a^{(i)}x' \ge 2(c_i - 2^{j_i-s_i} + 2) - (c_i-2^{j_i-1-s_i}+1) \ge c_i + 3 - 3\cdot 2^{s_i-1} > b_i.
$$

Thus, $x'\in M(A,b,c)$ and $y=\frac{1}{2}(x+x')$, hence
$y\notin N$. Contradiction.

In the case (\ref{eq:aapar1}) we consider the point $z'=2y-z$. Using analogous arguments one can show that 
$z'\in M(A,b,c)$ and $y=\frac{1}{2}(z+z')\notin N$.

Thus, each parallelepiped $P(j_1,j_2,\dots,j_{n-1})$ 
contains at most $2$ points from $N$, hence
$|N| \le 2 \prod_{i=1}^{n-1} (1+2s_i)$.
Using (\ref{eq:aasi}) we get (\ref{eq:aaN}).
\end{proof}

\section{Cover of a polytope by parallelepipeds\label{sec_cobering_irr}}

Our method for covering a polytope (i.\,e. bounded convex polyhedron) 
by parallelepipeds consists of constructing a polytope triangulation and 
covering each simplex in the triangulations by parallelepipeds.
Denote
$$
\xi _n (m) = {m-\lfloor \frac{n-1}{2} \rfloor -1 \choose \lfloor \frac{n}{2}
\rfloor }+{m-\lfloor \frac{n}{2} \rfloor -1 \choose \lfloor \frac{n-1}{2}
\rfloor }.
$$

\begin{lemma}\label{l_ChirkovTriang} \cite{Chirkov1993}, see \cite{Shevchenko1995}, \cite{VeselovChirkov2007}
For any $n$-dimensional polytope with $m$ facets ($n$-dimensional faces) there exists its triangulation
with at most $n!\xi_n(m)$ simplexes.
\end{lemma}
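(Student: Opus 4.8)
The plan is to recognize the quantity $\xi_n(m)$ as an Upper Bound Theorem expression and then to exhibit an explicit triangulation whose size it controls. First I would observe that $\xi_n(m)$ is exactly the number of facets of the cyclic polytope with $m$ vertices in dimension $n$: checking the two parity cases against the $\lfloor\cdot\rfloor$ exponents shows that for even $n=2\ell$ the formula reads ${m-\ell\choose\ell}+{m-\ell-1\choose\ell-1}$ and for odd $n=2\ell+1$ it reads $2{m-\ell-1\choose\ell}$, which are precisely the cyclic facet counts. By McMullen's Upper Bound Theorem the cyclic polytope maximizes the number of faces of every dimension among $n$-polytopes with $m$ vertices; passing to the polar dual of our polytope $P$, whose $m$ vertices correspond to the $m$ facets of $P$ and whose facets correspond to the vertices of $P$, this already shows that $P$ has at most $\xi_n(m)$ vertices, and it is this bound that carries the $m^{\lfloor n/2\rfloor}$ growth hidden in $\xi_n(m)$.

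Next I would build a triangulation using only the vertices of $P$ by a pulling (lexicographic) construction. Fix a total order on the vertices, let $v$ be the smallest, and cone $v$ over the recursively pulled triangulations of those facets $F$ of $P$ that do not contain $v$, each facet being triangulated in turn by pulling its own smallest vertex. Writing $N(P)$ for the number of maximal simplices produced, this yields the recursion $N(P)=\sum_{F:\,v\notin F}N(F)$ with $N(P)=1$ when $\dim P=0$. The inductive hypothesis on dimension is $N(F)\le (n-1)!\,\xi_{n-1}(m_F)$, where $m_F$ is the number of facets of the $(n-1)$-polytope $F$ (the ridges of $P$ lying in $F$). Substituting gives $N(P)\le (n-1)!\sum_{F}\xi_{n-1}(m_F)$, so the entire statement reduces to the single inequality $\sum_{F}\xi_{n-1}(m_F)\le n\,\xi_n(m)$, after which the factors $(n-1)!\cdot n=n!$ assemble exactly as claimed.

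The hard part will be this last inequality, which is where the combinatorics genuinely bites, since it asserts that the aggregate face count over all facets is again governed by the cyclic polytope. When $P$ is simplicial it is trivial: every facet is already a simplex, so $N(F)=1$ and $N(P)$ is at most the number of facets $m\le\xi_n(m)$, and the surplus factor $n!$ is precisely what absorbs the recursive triangulation of non-simplicial faces. In general I would establish $\sum_{F}\xi_{n-1}(m_F)\le n\,\xi_n(m)$ by an Upper-Bound-Theorem-style argument on flag numbers, showing that the total count of incident chains (facet, ridge, and so on) is maximized at the neighborly polytope. An alternative route that sidesteps the flag inequality is to run the induction on a pushing/placing triangulation built as the vertices are inserted in the fixed order, charging each newly created simplex to a facet of the current hull that is visible from the inserted vertex; either way what remains is the purely combinatorial estimate of how many simplices the extremal polytope forces, together with the verification that the stated closed form $n!\,\xi_n(m)$ is not exceeded.
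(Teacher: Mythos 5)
The paper does not actually prove Lemma~\ref{l_ChirkovTriang}; it imports it from \cite{Chirkov1993} (see also \cite{Shevchenko1995}, \cite{VeselovChirkov2007}), so there is no in-paper argument to compare yours against. Your opening observations are correct: $\xi_n(m)$ is the facet count of the cyclic polytope $C_n(m)$, by polarity it bounds the number of vertices of an $n$-polytope with $m$ facets, and the pulling recursion $N(P)=\sum_{F\not\ni v}N(F)$ is a sound skeleton for an induction on dimension.

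The fatal problem is the inequality to which you reduce everything, $\sum_{F}\xi_{n-1}(m_F)\le n\,\xi_n(m)$. You leave it unproven, and it is in fact \emph{false} for $n\ge 5$. Take $P$ to be the polar dual of the cyclic polytope $C_5(m)$: then $P$ has $m$ facets, and since $C_5(m)$ is $2$-neighborly every vertex of $C_5(m)$ has degree $m-1$, so every facet $F$ of $P$ has $m_F=m-1$ ridges. Hence $\sum_F\xi_4(m_F)=m\left({m-3\choose 2}+m-4\right)=\Theta(m^3)$, while $5\,\xi_5(m)=10{m-3\choose 2}=\Theta(m^2)$. (For $n\le 4$ your inequality happens to hold, because there $\xi_{n-1}$ is linear in its argument, so the sum collapses to a genuine face number of $P$ that McMullen's theorem controls; from $n=5$ on, $\xi_{n-1}(m_F)$ is a higher-degree polynomial in $m_F$ and the sum is a flag-type quantity, not a face number.) The underlying defect is that your accounting is local worst-case: a facet with $m-1$ ridges is nowhere near extremal for its own vertex and flag counts, and the worst cases for all facets cannot occur simultaneously, so summing the inductive bound facet by facet overcounts by a factor of order $m$. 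A correct proof must control a global quantity --- for instance, the maximal simplices of the recursive pulling triangulation inject into the complete flags of $P$, and $n!\,\xi_n(m)$ is exactly the number of complete flags in the dual-cyclic extremal case --- so what is really required is an upper bound theorem for complete flag numbers of polytopes with $m$ facets, a substantially deeper statement than McMullen's theorem that cannot be waved in as ``an Upper-Bound-Theorem-style argument.'' As written, the proposal establishes the lemma only for $n\le 4$.
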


The following assertion is a refinement of the result \cite{ChirkovFedotova}.

\begin{lemma} \label{l_ChirkovFedotova} (based upon \cite{ChirkovFedotova})
Any $n$-dimensional simplex $S$ can be covered by at most $(n+1) \cdot {n^2-2\choose n-1}$
$n$-dimensional parallelepipeds.
\end{lemma}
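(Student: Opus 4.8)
The plan is to reduce to a single canonical simplex and then cover it corner by corner. Since an invertible affine map sends parallelepipeds to parallelepipeds and carries an inner cover to an inner cover, and since the affine group acts transitively on $n$-dimensional simplices, it suffices to prove the bound for one fixed simplex, say the standard simplex $\Delta=\conv\{0,e_1,\dots,e_n\}=\{x\in\RR^n:\ x_j\ge 0\ (j=1,\dots,n),\ \sum_{j=1}^n x_j\le 1\}$, whose vertices are $v_0=0$ and $v_j=e_j$. The factor $n+1$ in the statement will come from splitting $\Delta$ into $n+1$ pieces, one attached to each vertex.

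For the splitting I would use the barycentric (dominance) regions $S_i=\{x\in\Delta:\ \lambda_i(x)\ge\lambda_j(x)\ \text{for all } j\}$, where $\lambda_0,\dots,\lambda_n$ are the barycentric coordinates. Every point of $\Delta$ lies in the region $S_i$ maximizing its coordinates, so $\{S_i\}$ covers $\Delta$; and because the symmetric group $S_{n+1}$ acts on $\Delta$ by affine automorphisms permuting the vertices, all the $S_i$ are affinely equivalent. Hence it is enough to cover the single region $S_0$ by ${n^2-2\choose n-1}$ parallelepipeds contained in $\Delta$ (the boxes for $S_i$ are then obtained by applying the symmetry carrying $S_0$ to $S_i$, and they remain inside $\Delta$). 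In the coordinates attached to $v_0=0$ one computes $S_0=\{x\ge 0:\ x_j+\sum_{i=1}^n x_i\le 1\ (j=1,\dots,n)\}$, and the key structural fact is that $S_0$ is a \emph{lower set}: if $x\in S_0$ and $0\le y\le x$ coordinatewise, then $y\in S_0$. Consequently $S_0$ can be covered by boxes anchored at the origin, $\prod_{j=1}^n[0,t_j]$, such a box lies in $\Delta$ precisely when $\sum_j t_j\le 1$, and covering $S_0$ reduces to choosing finitely many ``corner'' vectors $t$ with $\sum_j t_j\le 1$ whose boxes dominate the Pareto frontier $\{x\in S_0:\ \sum_i x_i+\max_j x_j=1\}$ of $S_0$.

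Finally I would exhibit an explicit finite family of such corner vectors and verify that the associated origin-anchored boxes cover $S_0$, refining the construction of \cite{ChirkovFedotova}. A single box does not suffice once $n\ge 3$: the largest value of any coordinate on the frontier is $\tfrac12$, which forces $t_j\ge\tfrac12$ for every $j$ and hence $\sum_j t_j\ge n/2>1$, so genuinely many boxes are needed and their number must be controlled. The main obstacle is exactly this last step: producing an explicit combinatorial indexing of the corner vectors and proving simultaneously that (i) each box satisfies $\sum_j t_j\le 1$ and so lies in $\Delta$, and (ii) their union contains all of $S_0$, with no gaps left along the slanted frontier, while keeping the number of boxes down to exactly ${n^2-2\choose n-1}$. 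The covering verification near the frontier and the bookkeeping that makes the count collapse to this binomial coefficient are the delicate parts; everything preceding them is routine affine geometry.
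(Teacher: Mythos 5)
Your setup is sound and runs parallel to the paper's: normalize affinely, split $S$ into $n+1$ pieces attached to the vertices (the paper uses the homothetic copies of ratio $\frac{n}{n+1}$ centered at the vertices, which contain your barycentric dominance regions and are equally easy to see cover $S$), and then cover the piece at the origin by boxes anchored at the origin. But the argument stops exactly where the lemma actually lives. You yourself flag as the ``main obstacle'' the need to exhibit the corner vectors $t$, to show each box $\prod_j[0,t_j]$ stays inside $S$, to show the boxes jointly cover $S_0$, and to make the count come out to ${n^2-2\choose n-1}$ --- and none of that is done. Without it there is no proof of the stated bound; indeed not even finiteness of the required family is argued, since a priori one might need infinitely many corner vectors to dominate the Pareto frontier of $S_0$.

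The paper closes this gap with a single scaling trick that your write-up is missing. Normalize so that $S=\{x\ge 0:\ \sum_j x_j\le n^2-1\}$; then the vertex piece at the origin is $S_0=\{x\ge 0:\ \sum_j x_j\le n^2-n\}$, leaving slack $n-1$ between the two right-hand sides. Take as corner vectors the set $Y$ of integer vectors $y\ge 1$ with $\sum_j y_j=n^2-1$; each box $\Pi(y)=\{x:\ 0\le x\le y\}$ lies in $S$ because $\sum_j x_j\le\sum_j y_j=n^2-1$. For coverage, given $x\in S_0$ put $z=\lceil x\rceil$; then $\sum_j z_j<(n^2-n)+n=n^2$, hence $\sum_j z_j\le n^2-1$ by integrality, so $z$ can be increased to some $y\in Y$, and $x\le z\le y$ gives $x\in\Pi(y)$. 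The slack of $n-1$ is chosen precisely so that rounding up each of the $n$ coordinates cannot push the sum past $n^2-1$, and $|Y|={n^2-2\choose n-1}$ is simply the number of compositions of $n^2-1$ into $n$ positive parts. This integer-rounding device is the entire content of the covering verification you left open; your dominance region is contained in the homothetic copy, so the same boxes would cover it once you adopt this discretization.
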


\begin{proof}
Let $v_0, v_1, \dots, v_n$ be the vertices of $S$.
Consider simplexes $S_i = \conv\{ w_{i0}, w_{i1}, \dots, w_{in} \}$ $(i=0,1,\dots, n)$, where
$$
w_{ij} = \frac{1}{n+1} \, v_i + \frac{n}{n+1} \, v_j
\qquad
(i=0,1,\dots, n,~j=0,1,\dots,n)
$$
(in particular, $w_{jj} = v_j$).
It is not hard to verify that $S_0, S_1, \dots, S_n$ form a cover of $S$ (see Fig.\,\ref{fig_simplex_simplex_cover}).

\begin{figure}%
\begin{center}
\psset{xunit=1cm,yunit=0.75cm}
\begin{pspicture}(0,-0.5)(9,6)

{
\psset{hatchwidth=0.5pt}
\pspolygon[fillstyle=vlines,hatchangle=0](0,0)(6,0)(2,4)
\pspolygon[fillstyle=vlines](3,0)(5,4)(9,0)
\pspolygon[fillstyle=hlines](1,2)(7,2)(3,6)
}

\rput[tr](0,0){$v_0$~}
\rput[tl](9,0){~$v_1$}
\rput[br](3,6){$v_2$~}

\end{pspicture}

\caption{The cover of the simplex $S$ by the simplexes $S_0$ (filled by vertical lines), $S_1$ (filled by NW-SE lines) and $S_2$ (filled by SW-NE lines).}
\label{fig_simplex_simplex_cover}

\end{center}
\end{figure}
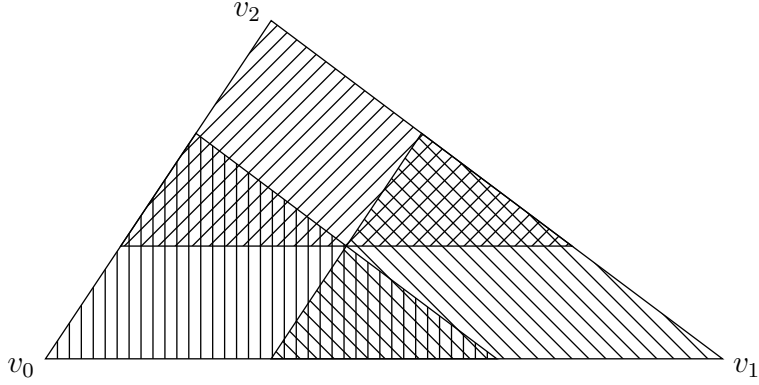

Now, for each $S_i$, we construct parallelepipeds $\Pi_{i1}, \Pi_{i2}, \dots, \Pi_{it}$ that form an {\em outer cover} of $S_i$, i.\,e.
\begin{equation}
S_i \subseteq \bigcup_{k=1}^t \Pi_{ik} \subseteq S \qquad (i=0,1,\dots, n).
\label{eq:SYS0}
\end{equation}
The family of all $\Pi_{ik}$ $(i=0,1,\dots, n,~k=1,2\dots,t)$ will form a cover of $S$.
 
Without loss of generality we suppose that
$$
S = \left\{ x\in\RR^n:~ \sum_{j=1}^n x_j \le n^2 - 1,~ x_j \ge 0 ~~ (j=1,2,\dots,n) \right\}.
$$
Then 
$$
S_0= \left\{x\in\RR^n:~ \sum_{j=1}^n x_j \le n^2 - n,~ x_j \ge 0 ~~ (j=1,2,\dots,n) \right\}.
$$
Let
$$
Y = \left\{ y\in\ZZ^n:~ \sum_{j=1}^n y_j = n^2 - 1,~ y_j \ge 1 ~~ (j=1,2,\dots,n) \right\}.
$$
For each vector $y\in Y$ we consider the parallelepiped
$$
\Pi(y) = \{x\in\RR^n:~ 0\le x_j \le y_j ~~(j=1,2,\dots,n)\}.
$$
It is clear that it is enough to prove (\ref{eq:SYS0}) only for $i=0$.
In our case (\ref{eq:SYS0}) takes the form
\begin{equation}
S_0 \subseteq \bigcup_{y\in Y} \Pi(y)\subseteq S. 
\label{eq:SYS}
\end{equation}
See Fig.\,\ref{fig_simplex_cover}.

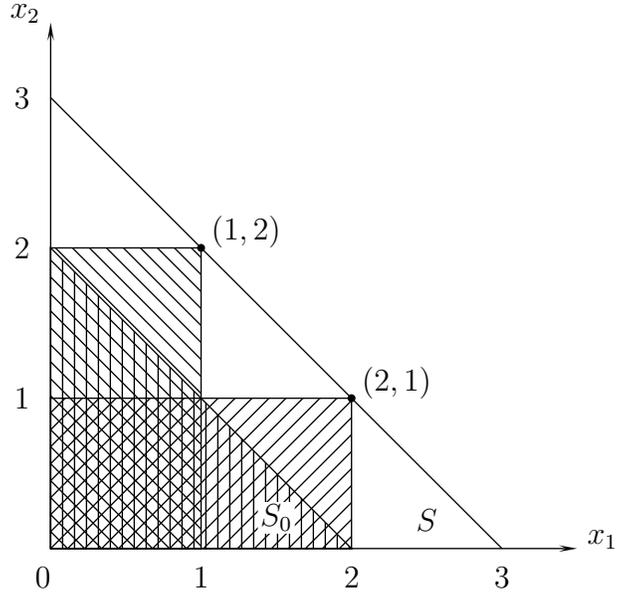
\begin{figure}%
\begin{center}
\psset{xunit=2cm,yunit=2cm}
\begin{pspicture}(0,-0.5)(3.5,3.5)

\psline[arrows=->](0,0)(3.5,0)
\psline[arrows=->](0,0)(0,3.5)
\rput[bl](3.5,0){~$x_1$}
\rput[br](0,3.5){$x_2$~}
\rput[tr](0,-0.12){$0$}
\rput[t](1,-0.12){$1$}
\rput[t](2,-0.12){$2$}
\rput[t](3,-0.12){$3$}
\rput[r](0,1){$1$~~}
\rput[r](0,2){$2$~~}
\rput[r](0,3){$3$~~}

\psline(3,0)(0,3)


{
\psset{hatchwidth=0.5pt}
\pspolygon[fillstyle=vlines,hatchangle=0](0,0)(2,0)(0,2)
\pspolygon[fillstyle=vlines](0,0)(1,0)(1,2)(0,2)
\pspolygon[fillstyle=hlines](0,0)(2,0)(2,1)(0,1)
}

\psdots(2,1)(1,2)
\rput[bl](2,1){~$(2,1)$}
\rput[bl](1,2){~$(1,2)$}

{
\psset{framesep=1pt}
\rput[b](2.5,0.1){\psframebox*{$S$}}
\rput[b](1.5,0.1){\psframebox*{$S_0$}}

}

\end{pspicture}

\caption{$2$-dimensional illustration to (\ref{eq:SYS}).
The simplex $S$ contains the simplex $S_0$ (filled by vertical lines) that is covered by parallelepipeds $\Pi(1,2)$ (filled by NW-SE lines) and $\Pi(2,1)$ (filled by SW-NE lines).}
\label{fig_simplex_cover}

\end{center}
\end{figure}

The second inclusion in (\ref{eq:SYS}) is obvious.
To prove the first one let us consider arbitrary vector $x\in S_0$.
Let $z = \lceil x \rceil$.
Then
$$
\sum_{j=1}^n z_j = \sum_{j=1}^n \lceil x_j \rceil < n^2 - n + n = n^2. 
$$
Taking into account that all components of $z$ are integer we get
$$
\sum_{j=1}^n z_j \le n^2 - 1. 
$$
Increasing (as appropriate) components of $z$ we get a vector $y\in Y$ such that $z \in \Pi (y)$
and consequently $x \in \Pi (y)$.

Thus, we have constructed the family of parallelepipeds $\{\Pi(y):~y\in Y\}$ that satisfy to (\ref{eq:SYS})
and contain a certain vertex (vertex $v_0 = 0$) of the simplex $S$. 
Performing the same construction for every vertex of the simplex we obtain its cover.


Now from (\ref{eq:SYS}) it follows that the constructed family of $(n+1)|Y| = (n+1){n^2-2\choose n-1}$ 
parallelepipeds is a cover of the simplex.
\end{proof}

\begin{lemma}\label{zch_main_l}
Suppose that a polytope $P$ is given as a set of solutions to a system of linear inequalities $Ax\le b$, 
where $A=(a_{ij})\in\ZZ^{m\times n}$, $b=(b_i)\in\ZZ^m$,
$|a_{ij}| \le \alpha$, $|b_i|\le\beta$, then there exists a cover of the polytope by at most
\begin{equation}
\eta_n(m) = n! \,\xi_n(m)\, (n+1)\, {n^2-2\choose n-1}
\label{eq:eta_nm}
\end{equation}
parallelepipeds $\Pi_{\mu}=\{x\in\RR^n:b^{(\mu)}\le A^{(\mu)}x\le c^{(\mu)}\}$ $(\mu=1,\dots,\eta_n(m))$, where
$A^{(\mu)}\in\ZZ^{n\times n}$, $b^{(\mu)}=(b^{(\mu)}_i)\in\ZZ^n$, $c^{(\mu)}=(b^{(\mu)}_i)\in\ZZ^n$,
such that 
\begin{equation}
|c^{(\mu)}_i - b^{(\mu)}_i| \le 2\alpha^{n^2}\beta^n(\sqrt{n})^{n^2+2n+2}.
\label{eq:ci-bi}
\end{equation}
\end{lemma}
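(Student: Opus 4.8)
The plan is to chain the two preceding covering results and then pin down the integer data and the edge lengths of the resulting parallelepipeds by elementary determinant estimates. First I would apply Lemma~\ref{l_ChirkovTriang} to triangulate $P$ into at most $n!\,\xi_n(m)$ simplexes, each of whose vertices lies in $\Ver P$; then I would apply Lemma~\ref{l_ChirkovFedotova} to cover each such simplex by at most $(n+1){n^2-2\choose n-1}$ parallelepipeds. Composing the two gives at most $\eta_n(m)$ parallelepipeds covering $P$, exactly the count in (\ref{eq:eta_nm}). By the construction in Lemma~\ref{l_ChirkovFedotova}, each of these parallelepipeds is the image $\phi(\Pi(y))$ of an integer box $\Pi(y)$ ($y\in Y$) under the affine map $\phi$ that sends the standard simplex of that lemma onto the actual simplex $\conv\{v_0,\dots,v_n\}$, where $v_0,\dots,v_n\in\Ver P$; explicitly $\phi(x)=v_0+\frac{1}{n^2-1}Gx$ with $G=[\,v_1-v_0,\dots,v_n-v_0\,]$. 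Since $\phi(\Pi(y))\subseteq S\subseteq P$, each such (rational) parallelepiped already lies inside $P$.

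Next I would produce the required integer description $\{x:b^{(\mu)}\le A^{(\mu)}x\le c^{(\mu)}\}$. For the $i$-th pair of opposite facets I take $A^{(\mu)}_i$ to be the primitive integer vector orthogonal to the edge directions $v_j-v_0$ ($j\ne i$), namely the usual $(n-1)$-fold cross product of those edges divided by the gcd $g$ of its coordinates. I then set $b^{(\mu)}_i=\lceil\min_x A^{(\mu)}_i x\rceil$ and $c^{(\mu)}_i=\lfloor\max_x A^{(\mu)}_i x\rfloor$, the extrema taken over the rational parallelepiped $\phi(\Pi(y))$. The point to stress is that rounding the offsets \emph{inward} costs nothing on the lattice: for integer $x$ the value $A^{(\mu)}_i x$ is itself an integer, so every lattice point of $\phi(\Pi(y))$ still satisfies $b^{(\mu)}_i\le A^{(\mu)}_i x\le c^{(\mu)}_i$. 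Hence $\Pi_\mu\cap\ZZ^n=\phi(\Pi(y))\cap\ZZ^n$ and $\Pi_\mu\subseteq\phi(\Pi(y))\subseteq P$, so the family $\{\Pi_\mu\}$ simultaneously has integer data, lies inside $P$, and still captures every integer point of $P$.

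Finally I would bound the edge length $c^{(\mu)}_i-b^{(\mu)}_i$. Because $A^{(\mu)}_i$ annihilates every edge $v_j-v_0$ with $j\ne i$ while $|A^{(\mu)}_i\cdot(v_i-v_0)|=|\det G|/g$ by cofactor expansion, the form $A^{(\mu)}_i x$ varies only with the $i$-th box coordinate and has range $\frac{y_i}{n^2-1}\,\frac{|\det G|}{g}$ over $\phi(\Pi(y))$; since $y_i\le n^2-1$ and $g\ge1$, and inward rounding only shrinks it, $c^{(\mu)}_i-b^{(\mu)}_i\le|\det G|$. It remains to estimate $|\det G|=|\det[\,v_1-v_0,\dots,v_n-v_0\,]|$. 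Each $v_i\in\Ver P$ solves an $n\times n$ subsystem of $Ax\le b$, so by Cramer's rule and Hadamard's inequality $\|v_i\|_\infty\le n^{n/2}\alpha^{n-1}\beta$; applying Hadamard once more to $G$ yields $|\det G|\le(2n^{(n+1)/2}\alpha^{n-1}\beta)^n$, and a routine comparison shows this is at most $2\alpha^{n^2}\beta^n(\sqrt n)^{n^2+2n+2}$ for $n\ge1$, which is (\ref{eq:ci-bi}). The main obstacle is precisely the interplay in the second step: one must pass to an \emph{integer} facet description without enlarging the parallelepiped beyond $P$ (which would destroy the containment needed later to compare irreducibility in $\Pi_\mu$ with irreducibility in $P$) and without dropping any lattice point of $P$; the observation that $A^{(\mu)}_i x\in\ZZ$ for lattice $x$ is what makes inward rounding safe, and tying the edge length to the single determinant $\det G$ rather than to the individually large normals $A^{(\mu)}_i$ is what keeps the estimate as small as (\ref{eq:ci-bi}).
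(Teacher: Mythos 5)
Your overall architecture coincides with the paper's: triangulate $P$ via Lemma~\ref{l_ChirkovTriang}, cover each simplex via Lemma~\ref{l_ChirkovFedotova}, multiply the two counts to get (\ref{eq:eta_nm}), and then control the data of the resulting parallelepipeds by Cramer's rule and Hadamard's inequality. The divergence is in how you produce the integer facet description, and that is where there is a genuine gap. You define $A^{(\mu)}_i$ as the $(n-1)$-fold cross product $w$ of the edges $v_j-v_0$ ($j\ne i$) ``divided by the gcd $g$ of its coordinates,'' and you use $g\ge 1$ to conclude $c^{(\mu)}_i-b^{(\mu)}_i\le|\det G|$. But the vertices of the simplexes are only rational, with denominators as large as $\alpha^n(\sqrt n)^n$, so $w$ is a rational, generally non-integer vector; the primitive integer normal is $\lambda w$ for a factor $\lambda$ that can greatly exceed $1$. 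For instance, in $\RR^2$ with $v_0=(0,0)$, $v_1=(1/3,0)$, $v_2=(1/2,1/2)$, the primitive integer normal to the line through $v_0,v_2$ is $(1,-1)=2w$, and its range over the simplex is $1/3=2|\det G|$, not $|\det G|$. In general $\lambda$ is controlled only by a common denominator of the entries of $w$, which can be of order $\alpha^{n(n-1)}$ or worse, so the inequality $c^{(\mu)}_i-b^{(\mu)}_i\le|\det G|$ does not follow and the final comparison with (\ref{eq:ci-bi}) is not justified. A warning sign is that your claimed bound $|\det G|\le 2^n\alpha^{n^2-n}\beta^n(\sqrt n)^{n^2+n}$ is strictly \emph{stronger} (by a factor of roughly $\alpha^{n}$) than the bound (\ref{eq:ci-bi}) the lemma actually asserts.

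The repair essentially forces you back onto the paper's route: write down an explicit integer normal to each facet by clearing denominators --- the paper takes $a_j$ equal, up to sign, to $q_j$ times a minor built from the numerator vectors $p_1,\dots,p_n$ and an all-ones column --- bound its entries by (\ref{eq:a_jl555}) via Hadamard, and then bound the offsets by $|a\cdot v|\le n\cdot\max_j|a_j|\cdot\max_j|v_j|$ for each vertex $v$ of the simplex, which gives $|b^{(\mu)}_i|,|c^{(\mu)}_i|\le\alpha^{n^2}\beta^n(\sqrt n)^{n^2+2n+2}$ and hence (\ref{eq:ci-bi}) with its factor $2$. (One could also keep your primitive normal $u$ and observe that it divides the paper's integer normal, so $\|u\|_\infty$ obeys (\ref{eq:a_jl555}) and the same pairing argument applies; either way the determinant bound on the normal's coefficients, not on $\det G$, is the indispensable ingredient.) Your inward-rounding remark --- that $A^{(\mu)}_ix\in\ZZ$ for $x\in\ZZ^n$, so integer offsets lose no lattice points --- is a legitimate point of care that the paper glosses over, but it does not affect the length estimate and slightly weakens the conclusion from a cover of $P$ to a cover of $P\cap\ZZ^n$, which is all the subsequent theorems use.
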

\begin{proof}
The required cover is constructed as follows.
First, using Lemma~\ref{l_ChirkovTriang}, we construct the triangulation of the polytope $P$.
Then, using Lemma~\ref{l_ChirkovFedotova}, we construct the cover of each simplex by parallelepipeds.
The upper bound (\ref{eq:eta_nm}) for the total number of parallelepipeds is obtained as a product
of the upper bounds for the number of simplexes in the triangulation and the number of parallelepipeds in the cover of the simplex.

Now we obtain the inequality (\ref{eq:ci-bi}).
First, we find a bound for the quantity of the coefficients in systems of inequalities, which can describe the simplexes in the triangulations.
It is well known that the components of the each vertex $v$ of $P$ (and consequently of simplexes in its triangulation)
can be obtained by turning corresponding $n$ inequalities of $Ax\le b$ into equations. Using Cramer's rule and Hadamard inequality
we get that $v=1/q \cdot (p_1,p_2,\dots,p_n)$, where $p_j\in\ZZ$ $(j=1,2,\dots,n)$, $q\in\ZZ$.
\begin{equation}
|q| \le \alpha^n (\sqrt{n})^n,
\quad
|p_j| \le \alpha^{n-1}\beta (\sqrt{n})^n
\quad
(j=1,2,\dots,n).
\label{eq:qp_i}
\end{equation}
If $v^{(1)},v^{(2)},\dots,v^{(n)}$ are some vertices of the simplex, $v^{(i)}=1/q^{(i)} \cdot (p^{(i)}_1,p^{(i)}_2,\dots,p^{(i)}_n)$,
then the coefficients of the equation $a_1x_1 + a_2x_2 +\dots a_nx_n = a_0$, which describes the hyperplane passing through these vertices,
can be calculated using the following formulas:
$$
a_0 = \det(p_1, p_2,\dots,p_n),
$$
$$
a_j = (-1)^{j+1} \,q_j\, \det(\mbox{\bf 1}, p_1, \dots, p_{i-1}, p_{i+1}, \dots,p_n)
\quad
(j=1,2,\dots,n),
$$
where $\mbox{\bf 1}$ is the column of ones.
From Hadamard inequality, using (\ref{eq:qp_i}), we get
$$
|a_0| \le \left( (\alpha\sqrt{n})^{n-1} \beta\sqrt{n} \cdot \sqrt{n}\right)^n=\alpha^{n(n-1)}\beta^n (\sqrt{n})^{n^2+n},
$$
\begin{equation}
|a_j| \le (\alpha\sqrt{n})^n  \sqrt{n}  \left( (\alpha\sqrt{n})^{n-1} \beta\sqrt{n} \cdot \sqrt{n}\right)^{n-1}
=\alpha^{n^2-n+1}\beta^{n-1} (\sqrt{n})^{n^2+n},
\label{eq:a_jl555}
\end{equation}
which gives bounds for the quantity of the coefficients in systems of inequalities describing the simplexes in the triangulation.

Now we obtain bounds for coefficients $c^{(\mu)}_i$,  $b^{(\mu)}_i$ of system of inequalities
describing parallelepipeds in the cover of simplexes.
Note that the method used in Lemma~\ref{l_ChirkovFedotova} gives parallelepipeds
with facets which are parallel to facets of the corresponding simplexes. 
Hence the coefficients in LHS of equations for these facets (i.e. the coefficients of the matrices $A^{(\mu)}$)
satisfy the inequality (\ref{eq:a_jl555}).
To obtain a bound for $|b^{(\mu)}_i|$, $|c^{(\mu)}_i|$ we put the components of the vertex $v$ of the simplex into the equation of the facet. 
From (\ref{eq:qp_i}) and (\ref{eq:a_jl555}) it follows that
$$
|b^{(\mu)}_i| \le n\cdot \alpha^{n-1}\beta (\sqrt{n})^n \cdot \alpha^{n^2-n+1}\beta^{n-1} (\sqrt{n})^{n^2+n} = \alpha^{n^2}\beta^n(\sqrt{n})^{n^2+2n+2}.
$$
The same inequality holds for $|c^{(\mu)}_i|$, which gives us (\ref{eq:ci-bi}).
\end{proof}

\section{Irreducible points in a polytope\label{sec_polytope_irr}}

Here, using results from two previous sections, we get a bound for the number of irreducible integer points
in a polytope.

\begin{theorem}\label{tNirred0}
Suppose that a polytope $P$ is given as the set of all solutions to a system of linear inequalities $Ax\le b$, where $A=(a_{ij})\in\ZZ^{m\times n}$, $b=(b_i)\in\ZZ^m$, $|a_{ij}| \le \alpha$, $|b_i|\le\beta$.
Let $N$ be the set of all irreducible points in $P\cap\ZZ^n$, then 
\begin{equation}
|N|\le 2 n! \,\xi_n(m)\, (n+1)\, {n^2-2\choose n-1} \left(3+2\log\left(1+\frac{2}{3}\alpha^{n^2}\beta^n(\sqrt{n})^{n^2+2n+2}\right)\right)^{n-1}.
\label{eq:n_irred_points}
\end{equation}
\end{theorem}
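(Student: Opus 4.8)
The plan is to synthesize the two main results of the preceding sections: the covering lemma (Lemma~\ref{zch_main_l}) and the per-parallelepiped bound (Theorem~\ref{parallelep_l2}). The bridge between them is the inheritance of irreducibility to subsets, which the introduction flagged as ``not hard to see.'' First I would invoke Lemma~\ref{zch_main_l} to cover $P$ by at most $\eta_n(m)=n!\,\xi_n(m)\,(n+1)\,{n^2-2\choose n-1}$ parallelepipeds $\Pi_\mu=\{x\in\RR^n:b^{(\mu)}\le A^{(\mu)}x\le c^{(\mu)}\}$, each satisfying $\Pi_\mu\subseteq P$ and, by~(\ref{eq:ci-bi}), the uniform edge bound $|c^{(\mu)}_i-b^{(\mu)}_i|\le 2\alpha^{n^2}\beta^n(\sqrt{n})^{n^2+2n+2}$.

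The key structural step is the following inheritance property: if $x\in\Pi_\mu$ is irreducible in $P\cap\ZZ^n$, then $x$ is irreducible in $\Pi_\mu\cap\ZZ^n=M(A^{(\mu)},b^{(\mu)},c^{(\mu)})$. This follows by contraposition. If $x$ were reducible in $\Pi_\mu$, say $x=\frac12(y+z)$ with distinct $y,z\in\Pi_\mu\cap\ZZ^n$, then since $\Pi_\mu\subseteq P$ we would have $y,z\in P\cap\ZZ^n$, contradicting the irreducibility of $x$ in $P$. Consequently every point of $N$ lies in at least one $\Pi_\mu$ and is irreducible in that $\Pi_\mu$, so $|N|$ is bounded by the sum over $\mu$ of the number of irreducible points in each $M(A^{(\mu)},b^{(\mu)},c^{(\mu)})$. (Overcounting of points lying in several parallelepipeds only inflates the bound, so it is harmless.)

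Next I would apply Theorem~\ref{parallelep_l2} to each parallelepiped $\Pi_\mu$; this requires that $A^{(\mu)}$ be non-singular, which holds because each $\Pi_\mu$ is a genuine $n$-dimensional parallelepiped. Using the monotonicity of the function $t\mapsto 3+2\log(1+t/3)$ together with the uniform edge bound above, each of the $n-1$ factors in the product~(\ref{eq:aaN}) is at most $3+2\log\bigl(1+\tfrac{2}{3}\alpha^{n^2}\beta^n(\sqrt{n})^{n^2+2n+2}\bigr)$, so each $\Pi_\mu$ contains at most
\begin{equation}
2\left(3+2\log\left(1+\frac{2}{3}\alpha^{n^2}\beta^n(\sqrt{n})^{n^2+2n+2}\right)\right)^{n-1}
\label{eq:per_paral}
\end{equation}
irreducible points. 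Multiplying~(\ref{eq:per_paral}) by the number $\eta_n(m)$ of parallelepipeds in the cover yields exactly the claimed bound~(\ref{eq:n_irred_points}).

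Honestly, there is no deep obstacle here: the theorem is essentially a product of two previously established estimates, and the argument is a clean assembly rather than a new idea. The only points demanding care are the inheritance of irreducibility (an easy contrapositive), the tacit verification that the covering parallelepipeds are full-dimensional so that Theorem~\ref{parallelep_l2} applies, and the monotonicity observation that lets one replace each $c^{(\mu)}_i-b^{(\mu)}_i$ by its uniform upper bound before taking the product.
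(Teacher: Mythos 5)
Your proposal is correct and follows essentially the same route as the paper's own proof: cover $P$ by the $\eta_n(m)$ parallelepipeds of Lemma~\ref{zch_main_l}, observe that irreducibility in $P$ is inherited by each parallelepiped of the cover, and apply Theorem~\ref{parallelep_l2} with the uniform edge bound~(\ref{eq:ci-bi}) before multiplying. The paper states these steps more tersely (calling the inheritance ``obvious''), but your added details on the contrapositive and the monotonicity substitution are exactly the intended justification.
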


\begin{proof}
Using Lemma~\ref{zch_main_l} we construct a cover of $P$ by parallelepipeds.
Obviously, $N$ is contained in the union of the sets of all irreducible integer points in all parallelepipeds.
To bound the number of irreducible points in a parallelepiped we use Theorem~\ref{parallelep_l2}.
Putting (\ref{eq:ci-bi}) in (\ref{eq:aaN}) and multiplying the result by $\eta_n(m)$ from (\ref{eq:eta_nm}), we get (\ref{eq:n_irred_points}).
\end{proof}

\begin{theorem}\label{tNirred}
Suppose $A\in\RR^{m'\times n}$, $b\in\RR^{m'}$, $P=\{x\in\RR^n:~ Ax\le b\}$, with $P\cap \ZZ^n \subseteq E_k^n$.
If $N$ is the set of all irreducible points in $P\cap \ZZ^n$, then for $|N|$
the inequality (\ref{eq:n_irred_points}) holds, where
$m=m'+2n$, 
$$
\alpha \le \frac{(k-1)^{n-1}(n+1)^{\frac{n+1}{2}}}{2^n},
\quad
\beta \le \frac{(k-1)^n(n+2)^{\frac{n+2}{2}}}{2^{n+1}}.
$$
\end{theorem}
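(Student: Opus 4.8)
The plan is to deduce Theorem~\ref{tNirred} from Theorem~\ref{tNirred0} by manufacturing an \emph{integer} system that carves out exactly the same lattice points as $Ax\le b$ and whose coefficients respect the stated bounds. Since $P\cap\ZZ^n\subseteq E_k^n$, I would first intersect $P$ with the cube $0\le x_j\le k-1$ $(j=1,\dots,n)$. This appends $2n$ inequalities, which accounts for $m=m'+2n$; their coefficients are $\pm1$ and their right-hand sides lie in $\{0,k-1\}$, so they already satisfy $|a_{ij}|\le\alpha$ and $|b_i|\le\beta$. Crucially, intersecting with the cube alters neither $P\cap\ZZ^n$ nor the set $N$ of irreducible points, since irreducibility is a property of the lattice-point set $P\cap\ZZ^n$ alone.

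Next I would replace each of the $m'$ original inequalities by an integer inequality equivalent to it on $E_k^n$. Fix a row and set $S=\{x\in E_k^n:\ a^{(i)}x\le b_i\}$. If $S=E_k^n$ the inequality is redundant and is discarded; if $S=\emptyset$ then $P\cap\ZZ^n=\emptyset$ and there is nothing to prove; otherwise $S$ is a proper, nonempty threshold subset of the cube. The set of $(a',b')\in\RR^{n+1}$ with $a'x\le b'-\frac{1}{2}$ for $x\in S$ and $a'x\ge b'+\frac{1}{2}$ for $x\in E_k^n\setminus S$ is a nonempty polyhedron, and it is pointed because $E_k^n$ affinely spans $\RR^n$; hence it has a vertex. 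That vertex is determined by $n+1$ linearly independent tight constraints $a'x^{(l)}-b'=\pm\frac{1}{2}$ with $x^{(l)}\in E_k^n$, i.e.\ by a square system whose matrix has the points $x^{(l)}$ as rows augmented by a column of $-1$'s. Solving by Cramer's rule and clearing the common determinant (up to sign) yields integer $(a',b')$ defining a halfspace that agrees with the original inequality at every point of $E_k^n$.

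To bound the resulting weights I would translate the cube to be centred at the origin, so that every coordinate is at most $(k-1)/2$ in absolute value. Each integer $a'_j$ and the integer $b'$ is then, up to sign, an $(n+1)\times(n+1)$ determinant whose columns are coordinate columns (of Euclidean norm at most $\frac{k-1}{2}\sqrt{n+1}$), one ``margin'' column with entries $\pm\frac{1}{2}$, and one column of $\mp1$'s. Hadamard's inequality on these determinants gives $|a'_j|\le\alpha$ and $|b'|\le\beta$; it is precisely the symmetric $\pm\frac{1}{2}$ margin combined with the centring of the cube that produces the powers of $2$ in the denominators of $\alpha$ and $\beta$. At this point I have an integer system of $m=m'+2n$ inequalities with $|a_{ij}|\le\alpha$, $|b_i|\le\beta$ defining a polytope whose lattice points---and therefore whose irreducible points---coincide with those of $P$, so Theorem~\ref{tNirred0} immediately yields the bound (\ref{eq:n_irred_points}).

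The main obstacle is the middle step: proving that each halfspace is equivalent over $E_k^n$ to an integer halfspace whose coefficients meet the \emph{precise} bounds, rather than a bound merely of the same order. This is the bounded integer-weight realizability of threshold functions on the cube, and the delicate part is arranging the formulation---a symmetric margin, a vertex of the weight polyhedron, and the centred cube---so that the Cramer--Hadamard estimate delivers exactly the constants $\alpha$ and $\beta$ appearing in the theorem.
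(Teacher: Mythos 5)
Your architecture is the paper's: append the $2n$ box inequalities, replace each original inequality by an integer inequality equivalent on $E_k^n$ whose coefficients are controlled by a Cramer--Hadamard estimate, and invoke Theorem~\ref{tNirred0}. The gap is exactly where you locate it, and it is not merely cosmetic. First, integrality: a vertex of your margin polyhedron solves $a'x^{(l)}-b'=\pm\frac{1}{2}$ for $n+1$ linearly independent tight constraints, so by Cramer's rule each component is $\det M_l/\det M$, where $M_l$ contains a column of $\pm\frac{1}{2}$'s; clearing only $\det M$ leaves half-integers, while clearing $2\det M$ costs a factor of $2$ that destroys the stated denominators $2^n$ and $2^{n+1}$. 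Centring the cube makes this worse: for even $k$ the points of $E_k^n-\frac{k-1}{2}\mathbf{1}$ are themselves half-integral, so the determinants you clear are no longer determinants of integer matrices. Second, un-centring: your Hadamard estimate (which does reproduce $\alpha$ correctly in the shifted coordinates) is applied after the translation, and translating the inequality back to $E_k^n$ changes the right-hand side by $\frac{k-1}{2}\sum_j a'_j$, which can be as large as $\frac{n(k-1)}{2}\alpha$ and pushes the bound on $\beta$ to roughly $(k-1)^n(n+1)^{(n+3)/2}/2^{n+1}$, exceeding the stated $(k-1)^n(n+2)^{(n+2)/2}/2^{n+1}$ already for $n\ge 3$. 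Nor can you instead apply Theorem~\ref{tNirred0} in the centred coordinates, because the relevant lattice there is no longer $\ZZ^n$.

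The paper sidesteps both problems by homogenizing rather than fixing a margin of $\frac{1}{2}$: it works with the cone $K\subseteq\RR^{n+2}$ of triples $(b_0,b,b_{n+1})$ satisfying $b_0-bx\ge 0$ for $x$ with $ax\le a_0$, $-b_0+bx-b_{n+1}\ge 0$ for $x$ with $ax>a_0$, and $b_{n+1}\ge 0$, so the margin is a variable. This cone is pointed by the same affine-span argument you use, hence generated by its extreme rays; each extreme ray can be taken integer with components equal, up to sign, to $(n+1)\times(n+1)$ minors of the integer constraint matrix, and at least one ray has $b_{n+1}>0$. After flipping signs of suitable rows and columns those minors have nonnegative entries bounded by $k-1$ (with the $b_0$- and $b_{n+1}$-columns bounded by $1$), and the sharpened determinant bound for matrices with nonnegative entries --- the same device as your ``centre and apply Hadamard,'' but applied to the minor itself rather than to the polytope --- yields precisely the factors $2^n$ and $2^{n+1}$ with no translation and no loss of integrality. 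If you only need the asymptotic conclusion $|N|=O(m^{\lfloor n/2\rfloor}\log^{n-1}k)$, your scheme suffices, since $\alpha$ and $\beta$ enter only through $\log(\alpha\beta)$; to obtain the theorem with the constants as stated you need the homogeneous formulation or an equivalent repair of the middle step.
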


\begin{proof}
For the inequality $ax\le a_0$, where $a\in\RR^n$, $a_0\in\RR$,
we consider the system of $k^n+1$ homogeneous linear inequalities in the variables $b_0 \in \RR$, $b\in\RR^n$, $b_{n+1}\in\RR$:
\begin{equation}
\left\{
\begin{array}{rl}
        b_0 - bx \phantom{{}-b_{n+1}} \ge 0\phantom{.}  & \mbox{for all $x$, such that $ax\le a_0$,} \\
       -b_0 + bx            -b_{n+1}  \ge 0\phantom{.}  & \mbox{for all $x$, such that $ax >  a_0$,} \\
                             b_{n+1}  \ge 0. 
\end{array}
\right.
\label{eq:bbetaCone}
\end{equation}
The set $K$ of all its solutions is a polyhedral cone in $\RR^{n+2}$.
Obviously, any vector in $K$, with $b_{n+1} >0$, has components $b_0$, $b$, $b_{n+1}$, such that
$\{x\in E_k^n:~ ax\le a_0\} = \{x\in E_k^n:~ bx\le b_0\}$.

We prove that the cone $K$ is pointed, i.\,e. it does not contain nonzero subspaces.
Suppose that both vectors $\pm(b_0,b,b_{n+1})$ belong to $K$.
In this case, from the last inequality in (\ref{eq:bbetaCone}), we get that $b_{n+1} = 0$,
then, from other inequalities, it follows that $bx=b_0$ for all $x\in E_k^n$.
Since $k\ge 2$, then the affine dimension of $E_k^n$ is $n$, hence $b_0=b=b_{n+1}=0$.
Thus, $K$ does not contain nonzero subspaces.

From the theory of linear inequalities (see, for example, \cite{Schrijver}) it follows that
the set of extreme vectors $g^{(1)},g^{(2)},\dots,g^{(s)}$ of $K$ forms its generative system,
that is, $K=\cone\{g^{(1)},g^{(2)},\dots,g^{(s)}\}$. Moreover, for each $i=1,2,\dots,s$ 
there exists a subsystem of (\ref{eq:bbetaCone}) that becomes a system of equalities on $g^{(i)}$,
with coefficients of the system forming a matrix $T_i$ of rank $n+1$.
This implies that $g^{(i)}$ can be chosen integer with its $j$-th component equal up to sign
to the minor of order $n+1$ cut down from $T_i$ by removing its $j$-th column. 

Let us bound the quantity of the minor.
Multiplying the rows corresponding to $x$ with $ax > a_0$ and columns corresponding to $b$ by $-1$, we get a minor with nonnegative entries.
Using well-known bounds for a determinant with nonnegative entries (see, for example, \cite{Prasolov}), we get the following
bounds for the components of $g^{(i)} = (g^{(i)}_0,g^{(i)}_1,\dots,g^{(i)}_n,g^{(i)}_{n+1})$:
$$
|g^{(i)}_0| \le \frac{(k-1)^n(n+2)^{\frac{n+2}{2}}}{2^{n+1}},
\quad
|g^{(i)}_j| \le \frac{(k-1)^{n-1}(n+1)^{\frac{n+1}{2}}}{2^n}
\quad (j=1,\dots,n).
$$
Among vectors $g^{(1)},g^{(2)},\dots,g^{(s)}$ there is a vector $(b_0,b,b_{n+1})$ with
$b_{n+1} > 0$. We call the inequality $bx\le b_0$ the approximation to the inequality $ax\le a_0$.

We replace all inequalities describing $P$ by those approximations and append $2n$ inequalities $0\le x_j \le k-1$ $(k=1,2,\dots,n)$.
The inequality to be proved follows now from Theorem~\ref{tNirred0}.
\end{proof}

Note that, for any fixed $n$, the bounds for $|N|$ in Theorems~\ref{tNirred0} and~\ref{tNirred} take the form, respectively, 
$$
|N| = O\left(m^{\lfloor\frac{n}{2}\rfloor}\log^{n-1} (\alpha\beta)\right),
\quad
|N| = O(m^{\lfloor\frac{n}{2}\rfloor}\log^{n-1} k).
$$

\section{Bounds for the teaching dimension of threshold functions\label{sec_td_irr}}

Here we prove the hypothesis concerning the teaching dimension of the class of threshold functions of $k$-valued logic in $n$ variables.
Recall that a set $T\subseteq E_k^n$ is called a {\em teaching} set for a threshold function $f\in\TT(n,k)$, 
iff, for any function $g\in\TT(n,k) \setminus\{f\}$, there is a point $z\in T$ such that $f(z)\ne g(z)$.
A teaching set $T$ of $f\in\TT(n,k)$ is called a {\em minimal teaching} set,
if no proper subset of it is teaching for $f$.
A point $z\in E_k^n$ is called {\em essential} for $f$ iff there exists a function $g\in\TT(n,k)$,
such that $f(z)\ne g(z)$ and $f(x) = g(x)$ for all $x\ne z$.
It is known (see, for example, \cite{ShevchenkoZolotykh1998}, \cite{ZolotykhShevchenko1999}), 
that the minimal teaching set $T(f)$ of any threshold function $f$ is unique and consists of all {\em essential points}.
For illustration see Fig.\,\ref{fig_th_f}, where the minimal teaching set for a threshold function $f\in \TT(2,20)$ is drawn.
The maximum cardinality of the minimal teaching set,
$$
\sigma(n,k) = \max_{f\in\TT(n,k)} |T(f)|,
$$
is called the {\em teaching dimension}.
It is known \cite{Hegedus1994}, \cite{ShevchenkoZolotykh1998}, \cite{ZolotykhShevchenko1999} that for any fixed $n\ge 2$
$$
\sigma(n, k) = O(\log^{n-1} k),
\qquad
\sigma(n, k) = \Omega(\log^{n-2} k),
\qquad
\sigma(2,k)=4.
$$
Here we prove that $\sigma(n, k) = \Theta(\log^{n-2} k)$.

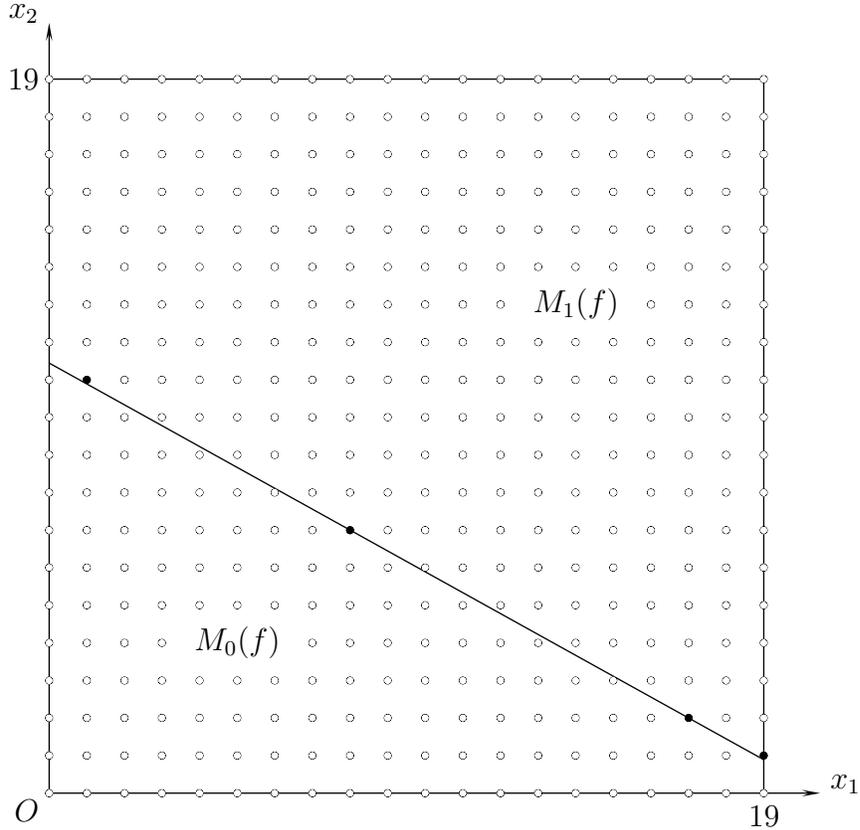
\begin{figure}[tb]
  \centering
\psset{xunit=0.5cm,yunit=0.5cm}
\begin{pspicture}(-.5,-1)(20,20)


  \psline(19,0)(19,19)(0,19)
  
  \psline(0,11.4444)(19,0.8889)
  
  \psline[arrows=->](20.5,0)
  \psline[arrows=->](0,20.5)
  \rput[bl](20.5,0){~$x_1$}
  \rput[br](0,20.5){$x_2$~}
  \rput[tr](0,-0.17){$O$~}
  \rput[t](19,-0.35){$19$}
  \rput[r](0,19){$19$~}

{\psset{dotstyle=o}
  \psdots(0,0)(1,0)(2,0)(3,0)(4,0)(5,0)(6,0)(7,0)(8,0)(9,0)(10,0)(11,0)(12,0)(13,0)(14,0)(15,0)(16,0)(17,0)(18,0)(19,0) 
  \psdots(0,1)(1,1)(2,1)(3,1)(4,1)(5,1)(6,1)(7,1)(8,1)(9,1)(10,1)(11,1)(12,1)(13,1)(14,1)(15,1)(16,1)(17,1)(18,1)(19,1) 
  \psdots(0,2)(1,2)(2,2)(3,2)(4,2)(5,2)(6,2)(7,2)(8,2)(9,2)(10,2)(11,2)(12,2)(13,2)(14,2)(15,2)(16,2)(17,2)(18,2)(19,2) 
  \psdots(0,3)(1,3)(2,3)(3,3)(4,3)(5,3)(6,3)(7,3)(8,3)(9,3)(10,3)(11,3)(12,3)(13,3)(14,3)(15,3)(16,3)(17,3)(18,3)(19,3) 
  \psdots(0,4)(1,4)(2,4)(3,4)(4,4)(5,4)(6,4)(7,4)(8,4)(9,4)(10,4)(11,4)(12,4)(13,4)(14,4)(15,4)(16,4)(17,4)(18,4)(19,4) 
  \psdots(0,5)(1,5)(2,5)(3,5)(4,5)(5,5)(6,5)(7,5)(8,5)(9,5)(10,5)(11,5)(12,5)(13,5)(14,5)(15,5)(16,5)(17,5)(18,5)(19,5) 
  \psdots(0,6)(1,6)(2,6)(3,6)(4,6)(5,6)(6,6)(7,6)(8,6)(9,6)(10,6)(11,6)(12,6)(13,6)(14,6)(15,6)(16,6)(17,6)(18,6)(19,6) 
  \psdots(0,7)(1,7)(2,7)(3,7)(4,7)(5,7)(6,7)(7,7)(8,7)(9,7)(10,7)(11,7)(12,7)(13,7)(14,7)(15,7)(16,7)(17,7)(18,7)(19,7) 
  \psdots(0,8)(1,8)(2,8)(3,8)(4,8)(5,8)(6,8)(7,8)(8,8)(9,8)(10,8)(11,8)(12,8)(13,8)(14,8)(15,8)(16,8)(17,8)(18,8)(19,8) 
  \psdots(0,9)(1,9)(2,9)(3,9)(4,9)(5,9)(6,9)(7,9)(8,9)(9,9)(10,9)(11,9)(12,9)(13,9)(14,9)(15,9)(16,9)(17,9)(18,9)(19,9) 
  \psdots(0,10)(1,10)(2,10)(3,10)(4,10)(5,10)(6,10)(7,10)(8,10)(9,10)(10,10)(11,10)(12,10)(13,10)(14,10)(15,10)(16,10)(17,10)(18,10)(19,10) 
  \psdots(0,11)(1,11)(2,11)(3,11)(4,11)(5,11)(6,11)(7,11)(8,11)(9,11)(10,11)(11,11)(12,11)(13,11)(14,11)(15,11)(16,11)(17,11)(18,11)(19,11) 
  \psdots(0,12)(1,12)(2,12)(3,12)(4,12)(5,12)(6,12)(7,12)(8,12)(9,12)(10,12)(11,12)(12,12)(13,12)(14,12)(15,12)(16,12)(17,12)(18,12)(19,12) 
  \psdots(0,13)(1,13)(2,13)(3,13)(4,13)(5,13)(6,13)(7,13)(8,13)(9,13)(10,13)(11,13)(12,13)(13,13)(14,13)(15,13)(16,13)(17,13)(18,13)(19,13) 
  \psdots(0,14)(1,14)(2,14)(3,14)(4,14)(5,14)(6,14)(7,14)(8,14)(9,14)(10,14)(11,14)(12,14)(13,14)(14,14)(15,14)(16,14)(17,14)(18,14)(19,14) 
  \psdots(0,15)(1,15)(2,15)(3,15)(4,15)(5,15)(6,15)(7,15)(8,15)(9,15)(10,15)(11,15)(12,15)(13,15)(14,15)(15,15)(16,15)(17,15)(18,15)(19,15) 
  \psdots(0,16)(1,16)(2,16)(3,16)(4,16)(5,16)(6,16)(7,16)(8,16)(9,16)(10,16)(11,16)(12,16)(13,16)(14,16)(15,16)(16,16)(17,16)(18,16)(19,16) 
  \psdots(0,17)(1,17)(2,17)(3,17)(4,17)(5,17)(6,17)(7,17)(8,17)(9,17)(10,17)(11,17)(12,17)(13,17)(14,17)(15,17)(16,17)(17,17)(18,17)(19,17) 
  \psdots(0,18)(1,18)(2,18)(3,18)(4,18)(5,18)(6,18)(7,18)(8,18)(9,18)(10,18)(11,18)(12,18)(13,18)(14,18)(15,18)(16,18)(17,18)(18,18)(19,18) 
  \psdots(0,19)(1,19)(2,19)(3,19)(4,19)(5,19)(6,19)(7,19)(8,19)(9,19)(10,19)(11,19)(12,19)(13,19)(14,19)(15,19)(16,19)(17,19)(18,19)(19,19) 
}  
  \psdots(17,2)(8,7)(19,1)(1,11)

  \rput(5,4){\psframebox*{$M_0(f)$}}
  \rput(14,13){\psframebox*{$M_1(f)$}}

\end{pspicture}
  \caption{Threshold function $f\in\TT(2,20)$ with threshold inequality $5x_1 +9x_2\le 103$.
           The minimal teaching set of $f$ consists of $4$ points: $(17,2)$, $(8,7)$ in $M_0(f)$ and $(19,1)$, $(1,11)$ in $M_1(f)$.} 
  \label{fig_th_f}
\end{figure}

Suppose $f\in\TT(n,k)$.
Let $K(f)= \cone\bigl( M_1(f) - M_0(f) \bigr)$, $F_0(f) = \conv  M_0(f) - K(f)$, $F_1(f) = \conv  M_1(f) + K(f)$.

In \cite{ZolotykhChirkov2012} a characterization of $T(f)$ in terms of
$F_0(f)$, $F_1(f)$ is proposed.

Denote $T_{\nu}(f) = T(f)\cap M_{\nu}(f)$ $(\nu=0,1)$.

\begin{theorem} \cite{ZolotykhChirkov2012}
Let $f\in \TT(n, k)$, then $T_{\nu}(f) = \Ver F_{\nu} (f)$ $({\nu}=0,1)$.
\end{theorem}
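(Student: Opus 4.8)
The plan is to prove the statement for $\nu=0$; the case $\nu=1$ then follows by interchanging the roles of $0$ and $1$, noting that $\cone(M_0(f)-M_1(f))=-K(f)$, so that the analogue of $F_0$ for the $1$-side is $\conv M_1(f)-\cone(M_0(f)-M_1(f))=\conv M_1(f)+K(f)=F_1(f)$. Fix $f\in\TT(n,k)$, write $M_0=M_0(f)$, $M_1=M_1(f)$, $K=K(f)$, and take $z\in M_0$. By the quoted description of $T(f)$, we have $z\in T_0(f)$ exactly when $z$ is \emph{essential}, i.e.\ when changing the value of $f$ at $z$ to $1$ again yields a threshold function. This holds iff $M_0\setminus\{z\}$ and $M_1\cup\{z\}$ can be separated by a hyperplane, i.e.\ iff $\conv(M_0\setminus\{z\})\cap\conv(M_1\cup\{z\})=\emptyset$; I will call this the essentiality of $z$. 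The goal is to match this with membership in $\Ver F_0(f)$.

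First I would treat the geometric side. Write $A=\conv(M_0\setminus\{z\})$. Since $f$ is threshold, its separating functional is strictly positive on every generator $y-x$ $(y\in M_1,\ x\in M_0)$ of $K$, so $K$ is pointed and the recession cone $-K$ of the polyhedron $F_0=\conv M_0-K$ is line-free; a short argument then shows that every vertex of $F_0$ lies in $M_0$. I claim that, for $z\in M_0$, one has $z\in\Ver F_0(f)$ iff $z\notin A-K$. Indeed, if $z\in A-K$, say $z=p-\kappa$ with $p\in A$ and $\kappa\in K$, then $z=\frac{1}{2}\big(p+(p-2\kappa)\big)$ with $p,\ p-2\kappa\in F_0$, and these two points are distinct unless $\kappa=0$; in the remaining case $\kappa=0$ we have $z=p\in A$, so $z$ is a proper convex combination of points of $M_0\setminus\{z\}$. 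Either way $z$ is not a vertex. Conversely, if $z\notin A-K$, then since $A-K$ is closed and convex one separates the point $z$ from $A-K$ and uses the resulting functional to expose $z$ as a vertex of $F_0$.

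It remains to identify essentiality with the condition $z\notin A-K$. Reformulating both as intersection statements, essentiality fails iff $(A-z)\cap C\neq\emptyset$, where $C=\conv\big((M_1-z)\cup\{0\}\big)$, while $z\notin\Ver F_0$ iff $(A-z)\cap K\neq\emptyset$. Because $M_1-z\subseteq M_1-M_0\subseteq K$ and $K$ is a convex cone, $C\subseteq K$; hence failure of essentiality forces $(A-z)\cap K\neq\emptyset$, which gives the implication $z\in\Ver F_0\Rightarrow z$ essential. The reverse implication is the crux. Assume $(A-z)\cap K\neq\emptyset$ and pick $p=\sum_i\lambda_i x_i\in A$ (convex weights $\lambda_i$, points $x_i\in M_0\setminus\{z\}$) with $p-z=\sum_j\mu_j(y_j-x'_j)\in K$, where $\mu_j\ge 0$, $y_j\in M_1$, $x'_j\in M_0$. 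Letting $\mu_z$ be the total weight of those $x'_j$ equal to $z$ and rearranging, I obtain $p+(\mu_z-1)z+\sum_{x'_j\neq z}\mu_j x'_j=\sum_j\mu_j y_j$. If $\mu_z>1$, then normalising by $\sum_j\mu_j>0$ expresses a point of $\conv M_0$ as a point of $\conv M_1$, contradicting that $f$ is threshold; hence $\mu_z\le 1$. Moving $z$ to the right then gives $p+\sum_{x'_j\neq z}\mu_j x'_j=\sum_j\mu_j y_j+(1-\mu_z)z$, where both sides are nonnegative combinations of equal total weight $W=1+\sum_{x'_j\neq z}\mu_j\ge 1$; dividing by $W$ produces a common point of $A=\conv(M_0\setminus\{z\})$ and $\conv(M_1\cup\{z\})$, so essentiality fails. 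This yields $z$ essential $\Rightarrow z\in\Ver F_0$, completing the equality $T_0(f)=\Ver F_0(f)$. I expect the last step to be the main obstacle: the representation of $p-z$ in the cone $K$ is not unique, and controlling the weight $\mu_z$ placed on $z$ itself is the delicate point, resolved precisely by the threshold property $\conv M_0\cap\conv M_1=\emptyset$, which excludes the degenerate case $\mu_z>1$.
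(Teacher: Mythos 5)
The paper does not prove this theorem: it is imported verbatim from the cited reference \cite{ZolotykhChirkov2012}, so there is no in-paper argument to compare against. Your proposal supplies a self-contained proof, and it checks out. The two pillars are both sound: (i) for $z\in M_0$, $z\in\Ver F_0(f)$ iff $z\notin A-K$ with $A=\conv(M_0\setminus\{z\})$, and (ii) essentiality of $z$ (equivalently, by the paper's stated characterization of $T(f)$ as the set of essential points, membership in $T_0(f)$) fails iff $(A-z)\cap\conv\bigl((M_1-z)\cup\{0\}\bigr)\ne\emptyset$; your weight-balancing argument correctly closes the gap between the cone $K$ and the truncated set $C$, with the threshold property $\conv M_0\cap\conv M_1=\emptyset$ ruling out $\mu_z>1$ exactly as you anticipate. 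The reduction of $\nu=1$ to $\nu=0$ via the complementary function is also legitimate, since over the finite grid $E_k^n$ the complement of a threshold function is threshold and has the same essential points. One step deserves slightly more care: in the converse half of (i), a functional $c$ strictly separating $z$ from $A-K$ only guarantees $c\cdot\kappa\ge 0$ on $K$, so the face of $F_0$ it exposes may be $z-(K\cap c^{\perp})$ rather than $\{z\}$ alone; you should add that $z$ is the apex of this translated pointed cone, hence an extreme point of that face and therefore of $F_0$ (or argue directly that any midpoint representation $z=\frac{1}{2}(u+v)$ with $u,v\in F_0$ forces $z\in A-K$). This is a routine repair, not a gap in the idea.
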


\begin{corollary}\label{l_razd0}
Let $f\in \TT(n, k)$, $x,y \in T_{\nu}(f)$ $(\nu = 0,1)$, $x\ne y$,
then
\begin{equation}
2x-y\not\in F_0(f)\cup F_1(f). 
\label{eq:F0F1_razd0}
\end{equation}
\end{corollary}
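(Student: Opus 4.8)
The plan is to prove Corollary~\ref{l_razd0} directly from the preceding theorem together with the vertex characterization $T_\nu(f)=\Ver F_\nu(f)$. Since $x,y\in T_\nu(f)$ are distinct vertices of the polyhedron $F_\nu(f)$, I would set $z=2x-y$ and argue that $z$ lies in neither $F_0(f)$ nor $F_1(f)$. The geometric content is that $x$ is an extreme point of $F_\nu(f)$, so reflecting a neighboring lattice point $y\in F_\nu(f)$ through $x$ must land outside $F_\nu(f)$; the extra work is ruling out the \emph{other} region $F_{1-\nu}(f)$.

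First I would recall the structure of the two sets. Writing $K=K(f)=\cone(M_1(f)-M_0(f))$, we have $F_0(f)=\conv M_0(f)-K$ and $F_1(f)=\conv M_1(f)+K$. The crucial separation fact, implicit in the definition of a threshold function via the inequality $\sum a_j x_j\le a_0$, is that the linear functional $a(\cdot)=\sum_j a_j(\cdot)_j$ satisfies $a(u)\le a_0$ for every $u\in M_0(f)$ and $a(u)>a_0$ for every $u\in M_1(f)$; consequently $a$ is nonpositive on every generator of $K$. Hence $a\le a_0$ throughout $F_0(f)$ and $a>a_0$ throughout $F_1(f)$, so the two sets are separated by this functional and in particular are disjoint.

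The two cases then split along this functional. For the case $z\in F_{1-\nu}(f)$: if $\nu=0$, the point $z=2x-y$ with $x,y\in M_0(f)$ satisfies $a(z)=2a(x)-a(y)\le 2a_0-a(y)$; I would need $a(y)\ge a_0$, but $y\in M_0(f)$ gives only $a(y)\le a_0$, so the naive bound is not immediately decisive and the argument must instead use that $z$ cannot meet $F_1(f)$ by invoking the more refined separating behavior (the strict inequality $a>a_0$ on $F_1$ versus $a(z)\le a_0$ when $a(y)= a_0$, handled via the extremality of $x$). The cleaner route is to exploit that $x\in\Ver F_\nu(f)$, meaning there is a supporting functional $c$ with $\langle c,x\rangle<\langle c,w\rangle$ for all $w\in F_\nu(f)\setminus\{x\}$ lying in the relevant direction; reflecting $y$ across $x$ reverses this inequality and pushes $z$ strictly to the far side, which together with the global separation by $a$ excludes both $F_0(f)$ and $F_1(f)$.

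I expect the main obstacle to be case $z\in F_\nu(f)$ itself, i.e.\ showing the reflection does not fall back into the \emph{same} set. This is precisely where vertexhood, rather than mere membership, is essential: an interior or boundary-but-nonextreme point could be reflected back inside, but a vertex cannot, since a vertex admits a strictly supporting hyperplane, and $z=2x-y=x-(y-x)$ lies on the opposite side of that hyperplane from $y$. Making this rigorous requires checking that the supporting hyperplane at the vertex $x$ of $F_\nu(f)$ genuinely separates $z$ from $F_\nu(f)$ given that $y\in F_\nu(f)$, which reduces to the defining inequality of the supporting hyperplane applied to the three collinear points $y$, $x$, $z$. Once both exclusions are established, \eqref{eq:F0F1_razd0} follows, completing the proof.
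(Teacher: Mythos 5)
Your handling of the case $2x-y\in F_\nu(f)$ is correct: if $z=2x-y$ were in $F_\nu(f)$, then $x=\frac{1}{2}\bigl(y+z\bigr)$ would be the midpoint of two distinct points of $F_\nu(f)$, contradicting $x\in\Ver F_\nu(f)$; your supporting-hyperplane formulation is just the dual phrasing of this. The genuine gap is the case $z\in F_{1-\nu}(f)$, which you try to close by combining the supporting functional $c$ at $x$ with the global separating functional $a$. Neither tool reaches that case. The functional $c$ strictly supports $F_\nu(f)$ at $x$ and therefore says nothing about membership in $F_{1-\nu}(f)$; and $a$ does not pin $z$ to the $F_\nu$-side of the separating hyperplane: for $\nu=0$ one only has $a(z)=2a(x)-a(y)$, which exceeds $a_0$ whenever $a(y)<2a(x)-a_0$, and nothing forces $a(y)=a_0$ for a vertex $y$ of $F_0(f)$. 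You notice this yourself (``the naive bound is not immediately decisive''), but the proposed repair --- ``handled via the extremality of $x$'' --- is an assertion, not an argument; no single functional strictly supports the \emph{union} $F_0(f)\cup F_1(f)$ at $x$ in general.

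The missing ingredient is the interaction of the cone $K(f)$ with the two sets. Since $\conv M_1(f)-\conv M_0(f)\subseteq\conv\bigl(M_1(f)-M_0(f)\bigr)\subseteq K(f)$ and $K(f)+K(f)=K(f)$, one has $F_1(f)-F_0(f)\subseteq K(f)$; moreover $F_0(f)-K(f)=F_0(f)$ and $F_1(f)+K(f)=F_1(f)$. Take $\nu=0$ (the case $\nu=1$ is symmetric). If $z=2x-y\in F_1(f)$, then $z-x=x-y\in F_1(f)-F_0(f)\subseteq K(f)$, hence $2y-x=y-(x-y)\in F_0(f)-K(f)=F_0(f)$, and $y=\frac{1}{2}\bigl(x+(2y-x)\bigr)$ is the midpoint of two distinct points of $F_0(f)$ --- contradicting that $y$ is a vertex of $F_0(f)$. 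Note that in this case the contradiction is with the extremality of $y$, not of $x$; your proposal is organized entirely around $x$ and never uses that $y$ too is a vertex, which is why the second case cannot be closed by your route.
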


Unfortunately, no convenient description of $F_0(f)\cup F_1(f)$ is known in the general case.
Nevertheless we consider a set $\TT'(n,k)$ of functions $f$, each of which can be given by a threshold inequality such that
$$
a_0\in{\bf Z},\quad a_j\in{\bf Z}, \quad 0< a_0 < a_j(k-1) \qquad (j=1,2,\dots,n).   
$$
Denote by $\ZZ_+^n$ the set of all vectors in $\ZZ^n$ with nonnegative components.
We say that a set $G\subset \ZZ_+^n$ has {\em the separation property}, iff 
from conditions $x,y\in G$, $x\ne y$ it follows that $2x-y\notin\ZZ_+^n$ \cite{Shevchenko1981}.
One can verify \cite{ZolotykhChirkov2012} that if $f\in \TT'(n,k)$, then $F_0(f)\cup F_1(f) = {\bf Z}_+^n$ and, consequently,
the property (\ref{eq:F0F1_razd0}) is equivalent to the separation property. From this we get the following result.

\begin{theorem}\label{t_ZolotykhChirkov2012-} \cite{ZolotykhChirkov2012}
If $f\in\TT'(n,k)$ and $n\ge 2$, then 
$$
|T_{\nu}(f)| \le n(1 + \log n)\Bigl( 1 + \log (k+1)  \Bigr)^{n-2} \qquad (\nu=0,1). 
$$
\end{theorem}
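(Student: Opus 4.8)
The plan is to split the argument into a soft combinatorial core and a hard geometric refinement; I treat $T_0(f)$, the estimate for $T_1(f)$ being obtained verbatim from $F_1(f)=\conv M_1(f)+K(f)$. Since the text records $F_0(f)\cup F_1(f)=\ZZ_+^n$ for $f\in\TT'(n,k)$, Corollary~\ref{l_razd0} applied to $(x,y)$ and to $(y,x)$ shows that $G:=T_0(f)\subseteq E_k^n$ has the two-sided separation property: for distinct $x,y\in G$ one has $2x-y\notin\ZZ_+^n$ and $2y-x\notin\ZZ_+^n$. I would first extract from this an \emph{antichain lemma}. Assign to $x$ the signature $\rho(x)=\bigl(\lfloor\log_2(1+x_j)\rfloor\bigr)_{j=1}^n\in\{0,1,\dots,\lfloor\log_2 k\rfloor\}^n$. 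If $\rho_j(x)\le\rho_j(y)$ for every $j$, then $x_j\le 2^{\rho_j(x)+1}-2=2(2^{\rho_j(x)}-1)\le 2(2^{\rho_j(y)}-1)\le 2y_j$ for every $j$, so $2y-x\in\ZZ_+^n$, contradicting separation. Hence distinct points of $G$ receive incomparable signatures, so $\rho$ is injective and its image is an antichain in the grid of chains. By the symmetric-chain decomposition of a product of chains (de Bruijn--Tengbergen--Kruyswijk) the largest antichain is the largest rank level, of size $O(\log^{n-1}k)$ for fixed $n$. This already reproves the earlier bound $\sigma(n,k)=O(\log^{n-1}k)$, and the whole content of the theorem is the removal of one further factor $\log k$.

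To remove it I would induct on $n$, exploiting that the threshold hyperplane $\sum_j a_jx_j=a_0$ (all $a_j>0$ under the $\TT'$ normalization) ties the coordinates together. Projecting out one coordinate is injective on any two-sided separation set, because two points agreeing in $n-1$ coordinates are comparable. Partition $G$ according to the scale $\rho_n(x)$; this gives $O(\log k)$ classes, and within one class all $n$-th coordinates lie in a single factor-two window $[2^c-1,2^{c+1}-2]$. Consequently the coordinate witnessing separation can never be the $n$-th, so the projection of a single scale class to the first $n-1$ coordinates is once more a two-sided separation set, now in $\ZZ_+^{n-1}$. The heart of the matter is to show that this projected class in fact carries the full teaching-set structure of an $(n-1)$-variable threshold function, so that the inductive bound $O(\log^{n-3}k)$ — and not merely the separation bound $O(\log^{n-2}k)$ — applies to it; summing over the $O(\log k)$ scales then produces $O(\log^{n-2}k)$. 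The recursion bottoms out at $n=2$, where the claim is that the vertices of $F_0(f)$ pressed against a line are $O(1)$ in number, in agreement with $\sigma(2,k)=4$.

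The main obstacle is precisely this descent of the hyperplane/vertex structure through the projection, together with the planar base case; the weaker separation property does not suffice on its own. Indeed the points $(4^i,4^{\,m-i})$ with $0\le i\le m$ and $4^m\le k-1$ form a two-sided separation set of size $\sim\frac12\log_2 k$ in the plane, in convex position, yet they sit on the hyperbola $x_1x_2=4^m$ and so cannot all be extreme against a single line. The refinement must therefore use genuinely that the points are vertices of $F_0(f)=\conv M_0(f)-K(f)$: the recession cone $K(f)=\cone\bigl(M_1(f)-M_0(f)\bigr)$ swallows all but boundedly many candidate vertices once the frozen coordinates are confined to a thin slab about the hyperplane. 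Carrying out this slab bookkeeping — in particular controlling the thickness contributed by the frozen coordinates when they are only pinned to within a factor of two — is where the real work lies and where the dimensional constant $n(1+\log n)$ is generated.
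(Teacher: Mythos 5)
First, a point of orientation: the paper does not prove this statement at all --- Theorem~\ref{t_ZolotykhChirkov2012-} is imported verbatim from \cite{ZolotykhChirkov2012} and used as a black box in the proof of Theorem~\ref{th_upper_sigma}. So there is no in-paper proof to match yours against; your attempt has to stand on its own.

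It does not, and you essentially say so yourself. The first half of your argument is sound: the two-sided separation property for $T_\nu(f)$ (valid since $F_0(f)\cup F_1(f)=\ZZ_+^n$ for $f\in\TT'(n,k)$), the dyadic signature $\rho$, the incomparability of signatures, and the antichain count all check out, but they only recover the older bound $O(\log^{n-1}k)$. The entire content of the theorem is the missing factor of $\log k$, and that is exactly the step you leave open: after freezing the scale of $x_n$ you correctly observe that the projection to the first $n-1$ coordinates is again a two-sided separation set, but applying the separation bound in dimension $n-1$ gives $O(\log^{n-2}k)$ per class times $O(\log k)$ classes, i.e.\ no improvement. To win you would need each scale class to satisfy the \emph{stronger} inductive bound $O(\log^{n-3}k)$, which requires showing that its projection is (contained in) the minimal teaching set of some function in $\TT'(n-1,k)$ --- and you explicitly flag this as ``the heart of the matter'' without supplying an argument. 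Your hyperbola example $(4^i,4^{m-i})$ is a good sanity check that separation alone cannot close this gap, but it thereby certifies that your proof is incomplete rather than completing it. The final paragraph about ``slab bookkeeping'' with the recession cone $K(f)$ is a research plan, not a proof; in particular nothing in the write-up actually produces the constant $n(1+\log n)$ or the base $1+\log(k+1)$, and the planar base case is asserted by appeal to $\sigma(2,k)=4$ rather than derived. As it stands, the proposal proves only $|T_\nu(f)|=O(\log^{n-1}k)$.
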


\begin{theorem}\label{th_upper_sigma}
For any fixed $n\ge 2$
$$
\sigma(n,k) = O(\log^{n-2} k)
\qquad
(k\to\infty).
$$ 
\end{theorem}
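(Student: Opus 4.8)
The plan is to bound $|T(f)| = |T_0(f)| + |T_1(f)|$ for an arbitrary $f\in\TT(n,k)$ and to show each summand is $O(\log^{n-2}k)$. Since coordinate permutations and the reflections $x_j\mapsto (k-1)-x_j$ are symmetries of $E_k^n$ that carry $\TT(n,k)$ onto itself and map essential points to essential points, they preserve $|T(f)|$, so I may first normalize the threshold inequality $\sum_j a_jx_j\le a_0$. Reflecting each coordinate with $a_j<0$ I arrange $a_j\ge 0$ for all $j$, and I discard the trivial cases $a_0<0$ (then $f\equiv 1$) and $a_0=0$ (then $M_0(f)$ is a face of the cube and $T(f)$ is negligible), so that $a_0>0$. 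The argument then proceeds by induction on $n$, the base case $n=2$ being the equality $\sigma(2,k)=4=O(1)$ quoted above.

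The principal case is $0<a_0<a_j(k-1)$ for every $j$: here $f\in\TT'(n,k)$, and Theorem~\ref{t_ZolotykhChirkov2012-} immediately yields $|T_\nu(f)|\le n(1+\log n)\bigl(1+\log(k+1)\bigr)^{n-2}=O(\log^{n-2}k)$ for $\nu=0,1$. It then remains to dispose of the degenerate configurations left by the normalization. If some $a_j=0$, then $f$ does not depend on $x_j$ and its essential points are controlled by those of an $(n-1)$-variable threshold function, so the inductive hypothesis bounds $|T_\nu(f)|$ by an expression of order at most $\log^{n-2}k$. If instead all $a_j>0$ but $a_0\ge a_j(k-1)$ for some $j$, then the threshold hyperplane meets the cube only in a neighbourhood of the facet $x_j=k-1$; every essential point therefore has $x_j$ confined to a bounded set of values, and fixing each such value reduces the count to boundedly many $(n-1)$-dimensional instances, again governed by the inductive bound.

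For the reductions to lower dimension I invoke the separation property of Corollary~\ref{l_razd0}, namely $2x-y\notin F_0(f)\cup F_1(f)$ for distinct $x,y\in T_\nu(f)$: this says precisely that the points of $T_\nu(f)$ are irreducible in $F_\nu(f)\cap\ZZ^n$ (indeed they are its vertices), so once a degenerate case has been restricted to a lower-dimensional polytope, the restricted points form an irreducible set there and Theorem~\ref{tNirred}, applied in dimension $n-1$, bounds their number by $O(\log^{n-2}k)$. Summing the contributions $|T_0(f)|$ and $|T_1(f)|$ yields $\sigma(n,k)=O(\log^{n-2}k)$. I expect the main obstacle to be exactly this degenerate-case analysis: verifying that when the $\TT'$ normalization fails the essential points genuinely collapse onto a bounded family of lower-dimensional slices, and that the separation property is inherited by the restricted configurations so that the $(n-1)$-dimensional irreducibility bound of Theorem~\ref{tNirred} can be applied cleanly.
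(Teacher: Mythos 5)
Your main case ($f\in\TT'(n,k)$ via Theorem~\ref{t_ZolotykhChirkov2012-}) and the reduction for $a_j=0$ match the paper, but your treatment of the remaining case --- all $a_j>0$ and $a_0\ge a_j(k-1)$ for some $j$ --- contains a genuine gap. The claim that the threshold hyperplane then meets the cube only near the facet $x_j=k-1$, so that every essential point has $x_j$ confined to a bounded set of values, is false. The condition $a_0\ge a_j(k-1)$ only says that the hyperplane exits the cube at or beyond the vertex $(k-1)e_j$ along the $j$-th coordinate axis; it does not localize the hyperplane. The paper's own running example $5x_1+9x_2\le 103$ in $E_{20}^2$ refutes it: here $a_0=103\ge 19\cdot 5$, yet the essential points $(17,2)$, $(8,7)$, $(19,1)$, $(1,11)$ have first coordinates $17,8,19,1$, spread across the whole range. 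So ``boundedly many $(n-1)$-dimensional slices, each handled by induction'' is not available, and this is exactly the case for which the paper developed its machinery on irreducible points in the first place.

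What the paper actually does there (after ordering $a_1\ge\dots\ge a_n\ge 0$, so the offending coordinate is the $n$-th): if $e_n\notin K(f)$, every point of $T_\nu(f)$ has $x_n\in\{0,k-1\}$ and induction applies; if $e_n\in K(f)$, it splits $T_\nu(f)$ into the points $T'_\nu(f)$ whose projection $\sum_{j<n}a_jx_j$ lies outside a critical band (these do have $x_n\in\{0,k-1\}$, hence induction) and the remainder $T''_\nu(f)$, which is \emph{not} confined to few slices. Those points are projected along $e_n$ onto the $(n-1)$-dimensional slab $P=\bigl\{x\in E_k^n:\ a_0-(k-1)a_n<\sum_{j<n}a_jx_j\le a_0,\ x_n=0\bigr\}$; because every line $\{y+\alpha e_n\}$ through a point $y\in P$ lies in $F_0(f)\cup F_1(f)$, Corollary~\ref{l_razd0} forces the projections to be pairwise distinct and irreducible in $P$, and only then does Theorem~\ref{tNirred}, applied in dimension $n-1$, give the $O(\log^{n-2}k)$ bound. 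You name both ingredients (Corollary~\ref{l_razd0} and Theorem~\ref{tNirred} in dimension $n-1$) but attach them to a slice-by-slice reduction that does not occur; the projection-onto-a-slab argument, which requires $e_n\in K(f)$ and the vertical-line property of $F_0(f)\cup F_1(f)$, is the missing idea.
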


\begin{proof}
Without loss of generality, we suppose that the coefficients of the threshold inequality of the function $f\in\TT(n,k)$
satisfy the conditions $a_1\ge a_2 \ge\dots \ge a_n\ge 0$.

If $a_0\le (k-1)a_n$, then the bound to be proved follows from Theorem~\ref{t_ZolotykhChirkov2012-}.
Now we consider the case when $a_0> (k-1)a_n$. If $e_n\notin K(f)$, then form $x\in T_{\nu}(f)$ 
it follows that $x_n=0$ or $x_n=k-1$, hence $|T_{\nu}(f)|\le 2\sigma(n-1,k)$. 

We denote by $e_j$ the vector with all components equal to $0$ except the $j$-th component equal to $1$.
Suppose $e_n\in K(f)$. Let
$$
T'_0(f)=\left\{x\in T_0(f):\ \sum_{j=1}^{n-1}a_jx_j\le a_0-(k-1)a_n\right\},
$$ 
$$
T''_0(f)=\left\{x\in T_0(f):\ \sum_{j=1}^{n-1}a_jx_j> a_0-(k-1)a_n\right\},
$$
$$
T'_1(f)=\left\{x\in T_1(f):\ \sum_{j=1}^{n-1}a_jx_j> a_0\right\},
$$
$$
T''_1(f)=\left\{x\in T_1(f):\ \sum_{j=1}^{n-1}a_jx_j\le a_0\right\}.
$$ 
If $x\in T'_{\nu}(f)$ ($\nu = 0,1$), then $x_n=0$ or $x_n=k-1$, hence $|T'_{\nu}(f)|\le 2\sigma(n-1,k)$. 

Let 
$$
P=\left\{x\in E_k^n:\ a_0-(k-1)a_n <\sum_{j=1}^{n-1}a_jx_j\le a_0,\ x_n=0\right\}.
$$ 
If $y\in P$, then, taking into account that $e_n\in K(f)$, we get
$$
\{y+\alpha e_n:~ \alpha\in Z\} \subset F_0(f)\cup F_1(f).
$$
Therefore, using Lemma~\ref{l_razd0}, we get that $|T''_{\nu}(f)|$ does not exceed 
the number of irreducible points in $P$. Since the dimension of $\conv P$ is at most 
$n-1$, then, by Theorem~\ref{tNirred}, the number of irreducible points
in $P$ is $O(\log^{n-2}k)$ when $n$ is fixed.
\end{proof}

Taking into account the lower bound $\sigma(n,k) = \Omega(\log^{n-2} k)$ (for fixed $n\ge 2$), 
obtained in \cite{ShevchenkoZolotykh1998}, \cite{ZolotykhShevchenko1999},
from Theorem~\ref{th_upper_sigma} we get the following assertion.
\begin{corollary}\label{cor_main_logn-2}
For any fixed $n\ge 2$
$$
\sigma(n,k) = \Theta(\log^{n-2} k)
\quad
(k\to\infty).
$$
\end{corollary}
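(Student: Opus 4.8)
The plan is to obtain the tight estimate $\Theta(\log^{n-2}k)$ by matching a freshly proved upper bound against an already-known lower bound of the same order. Since the assertion $\sigma(n,k)=\Theta(\log^{n-2}k)$ is by definition the conjunction of the two one-sided statements $\sigma(n,k)=O(\log^{n-2}k)$ and $\sigma(n,k)=\Omega(\log^{n-2}k)$, I would establish each inequality separately and then combine them.

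For the upper bound I would simply invoke Theorem~\ref{th_upper_sigma}, which gives $\sigma(n,k)=O(\log^{n-2}k)$ for every fixed $n\ge 2$ as $k\to\infty$. This is where essentially all the substantive work resides: that inequality rests on the bound for the number of irreducible points in a polytope (Theorem~\ref{tNirred}), applied after the inductive case analysis that splits the minimal teaching set according to the relative size of $a_0$ and $(k-1)a_n$ and reduces the remaining pieces either to a lower-dimensional teaching dimension $\sigma(n-1,k)$ or to an irreducible-point count in an $(n-1)$-dimensional slice.

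For the lower bound I would appeal to the matching estimate $\sigma(n,k)=\Omega(\log^{n-2}k)$ (for fixed $n\ge 2$) obtained in the earlier works \cite{ShevchenkoZolotykh1998} and \cite{ZolotykhShevchenko1999}, which exhibit an explicit family of threshold functions whose minimal teaching sets grow like $\log^{n-2}k$; I would take this as given. Putting the two one-sided bounds together yields $\sigma(n,k)=\Theta(\log^{n-2}k)$.

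Because both sides are already in hand, there is no genuine obstacle here — the corollary is a packaging step rather than a new argument. The only point deserving attention is to confirm that the implied constants, both in Theorem~\ref{th_upper_sigma} and in the cited $\Omega$ bound, are independent of $k$ for each fixed $n$, so that the two estimates genuinely refer to the same asymptotic scale in the single variable $k$; once this is checked, the $\Theta$ conclusion is immediate.
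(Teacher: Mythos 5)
Your proposal is correct and matches the paper's own argument exactly: the corollary is obtained by combining the upper bound of Theorem~\ref{th_upper_sigma} with the lower bound $\sigma(n,k)=\Omega(\log^{n-2}k)$ cited from \cite{ShevchenkoZolotykh1998} and \cite{ZolotykhShevchenko1999}. Nothing further is needed.
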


\paragraph{Acknowledgments} The authors thank V.\,N.\,Shevchenko and S.\,I.\,Veselov for fruitfull discussions
and referees for usefull suggestions.

\end{document}